\newtheorem{theorem}{Theorem}
\newtheorem{lemma}{Lemma}
\newtheorem{corollary}{Corollary}
\newtheorem*{question}{Question}
\newtheorem{property}{Property}
 \theoremstyle{definition}
\newtheorem{definition}{Definition}
\newtheorem{remark}{Remark}
\numberwithin{equation}{section}
\begin{document}
\title[Critical $Q$ spaces ]%
{Analytic version of critical $Q$ spaces and their  properties}

\author{ Pengtao Li, junming liu \&  Zengjian Lou*}

\address{Department of Mathematics, Shantou University, Shantou, Guangdong,
515063, \newline P.~R.~China}
 \email{ptli@stu.edu.cn}
 \email{08jmliu@stu.edu.cn}
 \email{zjlou@stu.edu.cn}

\begin{abstract}
In this paper, we establish an analytic version of critical spaces
$Q_{\alpha}^{\beta}(\mathbb{R}^{n})$ on unit disc $\mathbb{D}$,
denoted by $Q^{\beta}_{p}(\mathbb{D})$. Further we prove a relation
between $Q^{\beta}_{p}(\mathbb{D})$ and Morrey spaces. By the
boundedness of two integral operators, we give the multiplier spaces
of $Q^{\beta}_{p}(\mathbb{D})$.
\end{abstract}
\thanks{This work was supported by NNSF(Grant No. 11171203 and No. 11201280); New Teacher's Fund for Doctor Stations, Ministry of Education No.20114402120003;
Guangdong Natural Science Foundation (Grant No. 10151503101000025
and No. S2011040004131); Foundation for Distinguished Young Talents
in Higher Education of Guangdong, China, LYM11063.}
\thanks{*Corresponding author.}
\keywords{Critical Q spaces, analytic $Q$ space, Morrey space}
\subjclass[2000]{45P05, 30H}

\maketitle
\vspace{0.1in}
\section{Introduction}
As a new space between $W^{1,n}(\mathbb{R}^{n})$ and
$BMO(\mathbb{R}^{n})$, $Q$-type space has been studied extensively
since 1990s. In 1995, on the unit disc $\mathbb{D}$ of the complex
plane, R. Aulaskari, J. Xiao and R. Zhao first introduced a class of
M\"{o}bius invariant analytic function space $Q_{p}(\mathbb{D})$,
$p\in (0,1)$. The class $Q_{p}(\mathbb{D})$, $p\in (0,1)$, can be
seen as subspaces and subsets of $BMOA$ and $UBC$ on $\mathbb{D}$.
Since then, many studies on $Q_{p}(\mathbb{D})$ and their
characterizations have been done. We refer the reader to
\cite{ASX}, \cite{AXZ}, \cite{NX}, \cite{XJ2} and the references
therein.

In order to generalize $Q_{p}(\mathbb{D})$ to $\mathbb{R}^{n}$, in
\cite{EJPX}, M. Essen, S. Janson, L. Peng and J. Xiao introduced a
class of $Q$-type spaces of several real variables
$Q_{\alpha}(\mathbb{R}^{n})$, $\alpha\in (0, 1)$, which is defined
as the set of all measurable functions with
$$\sup_{I}(l(I))^{2\alpha-n}\int_{I}\int_{I}\frac{|f(x)-f(y)|^{2}}{|x-y|^{n+2\alpha}}dxdy<\infty,$$
where the supremum is taken over all cubes $I$ with edge length
$l(I)$ and the edges parallel to the coordinate axes in
$\mathbb{R}^{n}$. Later, in \cite{DX}, G. Dafni and J. Xiao
established the Carleson measure characterization of
$Q_{\alpha}(R^{n})$. For more information of $Q_{\alpha}(R^{n})$ and
their applications, we refer the reader to \cite{DX}, \cite{EJPX}
and \cite{XJ2}. For the generalization of $Q_{\alpha}(R^{n})$, we
refer to \cite{LZ} and \cite{YY}.

It is easy to see that $Q-$type spaces has a  structure similar to
$BMO(\mathbb{R}^{n})$. Moreover, the definition of
$Q_{\alpha}(\mathbb{R}^{n})$ implies that the functions in
$Q_{\alpha}(\mathbb{R}^{n})$ should own some regularity. Hence, in
many cases, $Q_{\alpha}(\mathbb{R}^{n})$ can be  seen as an adequate
replacement of $BMO(\mathbb{R}^{n})$. In recent years, these spaces
have been applied to the study of partial differential equations by
several authors. In \cite{XJ0}, J. Xiao got the well-posedness of
the Navier-Stokes equation with data in
$Q_{\alpha}^{-1}(\mathbb{R}^{n})$, where
$Q_{\alpha}^{-1}(\mathbb{R}^{n})=\nabla \cdot
(Q_{\alpha}(\mathbb{R}^{n}))^{n}$. For $\alpha\in (-\frac{n}{2},0)$,
$Q_{\alpha}(\mathbb{R}^{n})=BMO(\mathbb{R}^{n})$. Hence Xiao's
result generalizes that of Koch and Tataru \cite{KT}. Inspired the
results of \cite{KT} and \cite{XJ0}, P. Li and Z. Zhai introduced a
new critical $Q$-type spaces $Q_{\alpha}^{\beta}(\mathbb{R}^{n})$
with $\alpha>0$ and $\max\{\frac{1}{2}, \alpha\}<\beta<1$ such that
$\alpha+\beta-1\geq 0$. With data in $Q_{\alpha}^{\beta,
-1}(\mathbb{R}^{n})=\nabla\cdot
(Q_{\alpha}^{\beta}(\mathbb{R}^{n}))^{n}$ being small, in \cite{LZ},
P. Li and Z. Zhai obtained the well-posedness of the generalized
Navier-Stokes equations. This result generalizes the well-posed
results of \cite{KT} and \cite{XJ0} to the fractional cases. Also,
the spaces $Q^{\beta}_{\alpha}(R^{n})$ can be used to study the
well-posedness of quasi-geostrophic equation, see \cite{LZ2}.

Now we state the motivation of this paper. In \cite{LZ}, the authors
introduced the following $Q$-type spaces to study the generalized
Navier-Stokes equations.

\begin{definition}\label{def1}
For $0<\alpha<1$, $\max\{\alpha, \frac{1}{2}\}<\beta<1$ and
$\alpha+\beta\geq 1$, $Q_{\alpha}^{\beta}(\mathbb{R}^{n})$ is
defined as the  space of all measurable functions with
$$\sup_{I}(l(I))^{2\alpha-n+2\beta-2}\int_{I}\int_{I}\frac{|f(x)-f(y)|^{2}}{|x-y|^{n+2\alpha-2\beta+2}}dxdy<\infty,$$
where the supremum is taken over all cubes $I$ with edge length
$l(I)$ and the edges parallel to the coordinate axes in
$\mathbb{R}^{n}$.
\end{definition}

By a simple computation, we can see that
$Q^{\beta}_{\alpha}(\mathbb{R}^{n})$ is the spaces which is
invariant under the dilation:
$f_{\lambda}(x)=\lambda^{2\beta-1}f(\lambda x)$. In the research of
partial differential equations, such spaces is called critical
spaces and play an important role in the well-posedness of fluid
equations. If $\beta=1$, $Q_{\alpha}^{\beta}(\mathbb{R}^{n})$
becomes $Q_{\alpha}(\mathbb{R}^{n})$  introduced in \cite{EJPX}. It
is well-known that when $n=1$, $Q_{\alpha}(\mathbb{R}^{1})$ can be
seen as the boundary value of $Q_{p}(\mathbb{D})$. It is natural to
ask
\begin{question}
What about the analytic version of
$Q_{\alpha}^{\beta}(\mathbb{R}^{n})$?
\end{question}

The aim of this paper is to establish an analytic version on the
unit disc of $Q_{\alpha}^{\beta}(\mathbb{R}^{n})$. Precisely, we
prove a Carleson-measure characterization of the spaces
$Q_{p}^{\beta}(\mathbb{T})$, the $Q$ type spaces on the unit circle
associate to $Q_{\alpha}^{\beta}(\mathbb{R}^{n})$.
 Using this characterization implies that
 $Q^{\beta}_{p}(\mathbb{D})$, the space of analytic functions in $\mathbb{\mathbb{D}}$ which
 are the Poisson integral of $Q_{p}^{\beta}(T)$ functions, can be seen as a harmonic
extension of $Q^{\beta}_{p}(\mathbb{T})$. Proofs of these results
are motivated by the idea of \cite{XJ2} by J. Xiao. Then we study
properties of the spaces $Q^{\beta}_{p}(\mathbb{D})$. By the
$\nu-$derivative of $f$ on $\mathbb{D}$, we establish a relation
between $Q^{\beta}_{p}(\mathbb{D})$ and Morrey spaces
$\mathcal{L}^{2, \lambda}$. For the special case $\beta=1$, such
relation has been obtained by Z. Wu and C. Xie in \cite{WX}. Hence
our result can be seen as a generalization of that in \cite{WX}.
Finally, by the boundedness of two integral operators, we obtain the
multiplier spaces of $Q^{\beta}_{p}(\mathbb{D})$.

This paper is organized as follows. In Section 2, we state some
notations and terminology which will be used in the sequel. In
Section 3, we investigate the harmonic extension of
$Q_{\alpha}^{\beta}(\mathbb{T})$ and the analytic version of
$Q_{\alpha}^{\beta}(\mathbb{R}^{n})$. The relation between
$Q^{\beta}_{p}(\mathbb{D})$ and Morrey spaces are given in Section
4. Section 5 is devoted to the study of multiplier spaces of
$Q_{p}^{\beta}(\mathbb{D})$.

Throughout this paper, for two functions $F$ and $G$, we say
$F\lesssim G$ if there is a positive constant $C$ independent of $F$
and $G$ such that $F\leq CG$. Furthermore, we say $F\approx G$ (that
is, $F$ is comparable with $G$ ) whenever $F\lesssim G\lesssim F$.

\section{Notation and preliminaries}
 Let $\mathbb{D}$ and  $\mathbb{T}$ denote respectively the open unit disc and the unit circle in the
complex plane $\mathbb{C}$. Let $H(\mathbb{D})$ be the space of all
analytic functions on $\mathbb{D}$. For $0<p<\infty$,
$Q_{p}(\mathbb{D})$ is the set of all functions $f\in H(\mathbb D)$
with
$$\|f\|_{Q_{p}(\mathbb{D})}=|f(0)|+\sup_{a\in\mathbb D}\left(\int_{\mathbb{D}}|f'(z)|^{2}(1-|\sigma_{a}(z)|^2)^{p}dA(z)\right)^{1/2}<\infty,$$
where $\sigma_{a}(z)=\frac{a-z}{1-\overline{a}z}$ is the M\"{o}bius
map and $dA(z)=\frac{1}{\pi}dxdy$ is the normalized Lebesgue area
measure. If $p=1$, then $Q_{p}(\mathbb{D})=BMOA(\mathbb{D})$, the
John-Nirenberg space of all analytic functions with bounded mean
oscillation. That is $f\in BMOA(\mathbb{D})$ if and only if $f$
belongs to Hardy space $H^{2}(\mathbb{D})$ and satisfies
$$\|f\|_{BMOA}=\sup_{I\subset T}|I|^{-1}\int_{I}|f(\zeta)-f_{I}|\frac{|d\zeta|}{2\pi}<\infty,$$
where the supremum is taken over all open subarcs $I\subset
\mathbb{T}$ with
$$|I|=\int_{I}\frac{|d\zeta|}{2\pi},\ \mbox{and}\ f_{I}=|I|^{-1}\int_{I}f(\zeta)\frac{|d\zeta|}{2\pi}.$$

For $-\infty<p<\infty$, $Q_{p}(\mathbb{T})$ is the space of all
Lebesgue measurable functions $f: \mathbb{T}\rightarrow \mathbb{C}$
with
$$\|f\|_{Q_{p}(\mathbb{T})}=\sup_{I\subset \mathbb{T}}\left( |I|^{-p}\int_{I}\int_{I}\frac{| f(\zeta)-f(\eta)|^{2}}{|\zeta-\eta|^{2-p}}
|d\eta| |d\zeta| \right)^{1/2}<\infty,$$ where the supremum is taken
over all arcs $I\subset \mathbb{T}$.  For more information on spaces
$Q_{p}(\mathbb{D})$ and $Q_{p}(\mathbb{T})$, see \cite{NX},
\cite{XJ1}, \cite{XJ2} and \cite{XJ3}, for example.

Let $I\subset \mathbb{T}$ be an interval. We define Carleson box as
$$S(I)=\Big\{z\in \mathbb{D}: 1-|I|\leq |z|<1, \frac{z}{|z|}\in I\Big\}.$$
For $0<p<\infty$, a positive Borel measure $\mu$ on $\mathbb{D}$ is
said to be a $p$-Carleson measure if
$$\sup_{I\subset T}\frac{\mu(S(I))}{|I|^{p}}<\infty.$$
If $p=1$, $p$-Carleson measure is the classical Carleson measure,
see \cite{CL} and \cite{GJB}.

The following result on $p$-Carleson measure is well-known, see
\cite{ASX}.

\begin{lemma}\label{lem1}
For $0<p<\infty$, a positive Borel measure $\mu$ on $\mathbb{D}$ is
a $p$-Carleson measure if and only if
$$\sup_{a\in \mathbb{D}}\int_{\mathbb{D}}\left(\frac{1-|a|^{2}}{|1-\overline{a}z|^{2}}\right)^{p} d\mu(z)<\infty.$$
\end{lemma}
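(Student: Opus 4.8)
The plan is to establish both directions directly from the geometry of Carleson boxes, relying only on the elementary fact that, for $a,z\in\mathbb{D}$, the quantity $|1-\overline{a}z|$ is comparable to the sum of the radial gap $1-|a||z|$ and the angular gap $|a||z|\,d(z/|z|,a/|a|)$, where $d$ denotes normalized arc-distance on $\mathbb{T}$; in particular $|1-\overline{a}z|\geq 1-|a|$ always.

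For the sufficiency direction, I would fix an arc $I\subset\mathbb{T}$, let $\zeta_{I}$ be its midpoint, and test the hypothesis at the point $a=(1-|I|)\zeta_{I}$. For $z\in S(I)$ one has $1-|a|^{2}\approx|I|$ and $|1-\overline{a}z|\lesssim|I|$, so $(1-|a|^{2})/|1-\overline{a}z|^{2}\gtrsim|I|^{-1}$, and hence
$$\frac{\mu(S(I))}{|I|^{p}}\ \lesssim\ \int_{S(I)}\Big(\frac{1-|a|^{2}}{|1-\overline{a}z|^{2}}\Big)^{p}d\mu(z)\ \leq\ \int_{\mathbb{D}}\Big(\frac{1-|a|^{2}}{|1-\overline{a}z|^{2}}\Big)^{p}d\mu(z)\ \leq\ \sup_{a\in\mathbb{D}}\int_{\mathbb{D}}\Big(\frac{1-|a|^{2}}{|1-\overline{a}z|^{2}}\Big)^{p}d\mu(z).$$
Taking the supremum over all arcs $I$ then gives the $p$-Carleson condition.

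For the necessity direction, assume $\mu$ is $p$-Carleson with constant $M=\sup_{I}\mu(S(I))/|I|^{p}$ and fix $a\in\mathbb{D}$; the case $a=0$ is immediate since the integral equals $\mu(\mathbb{D})=\mu(S(\mathbb{T}))\leq M$. I would pick the least integer $N\geq 0$ with $2^{N}(1-|a|)\geq 1$ and, for $0\leq k\leq N$, let $I_{k}$ be the arc centered at $a/|a|$ with $|I_{k}|=\min\{2^{k}(1-|a|),1\}$, so that $S(I_{0})\subset S(I_{1})\subset\cdots\subset S(I_{N})=\mathbb{D}$, and then split
$$\int_{\mathbb{D}}\Big(\frac{1-|a|^{2}}{|1-\overline{a}z|^{2}}\Big)^{p}d\mu(z)=\int_{S(I_{0})}\Big(\frac{1-|a|^{2}}{|1-\overline{a}z|^{2}}\Big)^{p}d\mu(z)+\sum_{k=1}^{N}\int_{S(I_{k})\setminus S(I_{k-1})}\Big(\frac{1-|a|^{2}}{|1-\overline{a}z|^{2}}\Big)^{p}d\mu(z).$$
On $S(I_{0})$ the bound $|1-\overline{a}z|\geq 1-|a|=|I_{0}|$ gives a contribution $\lesssim\mu(S(I_{0}))/|I_{0}|^{p}\leq M$. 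On $S(I_{k})\setminus S(I_{k-1})$ the fact that $z\notin S(I_{k-1})$ forces $|1-\overline{a}z|\gtrsim|I_{k-1}|\approx 2^{k}(1-|a|)$, so $(1-|a|^{2})/|1-\overline{a}z|^{2}\lesssim 2^{-k}/|I_{k}|$ and the $k$-th term is $\lesssim 2^{-kp}\,\mu(S(I_{k}))/|I_{k}|^{p}\leq 2^{-kp}M$. Summing the geometric series $\sum_{k\geq 1}2^{-kp}$, convergent precisely because $p>0$, bounds the whole integral by a multiple of $M$ uniformly in $a$, which is the desired estimate.

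The two geometric estimates ($|1-\overline{a}z|\lesssim|I|$ on $S(I)$, and $|1-\overline{a}z|\gtrsim|I_{k-1}|$ off $S(I_{k-1})$) are routine once one writes $|1-\overline{a}z|^{2}=(1-|a||z|)^{2}+2|a||z|(1-\cos\psi)$ with $\psi$ the angle between $a$ and $z$. I expect the only mildly delicate point to be the bookkeeping of the dyadic chain of boxes: making sure they are correctly nested, handling the truncation at $|I_{N}|=1$ so that $S(I_{N})=\mathbb{D}$, treating separately the regime where $1-|a|$ is not small (so the chain is essentially a single step), and checking that every implied constant is independent of $a$. Once that is settled, convergence of the geometric series closes the argument. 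Alternatively, one could deduce the lemma from the $L^{2}$ Carleson embedding theorem applied to the test functions $z\mapsto\big((1-|a|^{2})/(1-\overline{a}z)^{2}\big)^{p/2}$, but the direct estimate above is self-contained; cf.\ \cite{ASX}.
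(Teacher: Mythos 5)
The paper does not prove this lemma; it only cites it as well known from \cite{ASX}. Your argument --- testing at $a=(1-|I|)\zeta_I$ for sufficiency and summing over the dyadic chain of Carleson boxes $S(I_k)$ with the geometric decay $2^{-kp}$ for necessity --- is precisely the standard proof of that cited result, and it is correct modulo the routine geometric estimates and edge cases ($|a|$ bounded away from $1$, small $|z|$, truncation at $|I_k|=1$) that you explicitly flag and that do work out.
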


\section{\bf Harmonic extension}
In this section, we establish an analytic version on the unit disc
of $Q_{\alpha}^{\beta}(\mathbb{R}^{n})$. At first, we introduce the
definition of $Q-$type spaces on unit circle associated with
$Q_{\alpha}^{\beta}(\mathbb{R}^{n})$.
\begin{definition}
Let $0<p<1$, $1/2<\beta<1$ and $f\in L^{2}(\mathbb{T})$. We say
$f\in Q_{p}^{\beta}(\mathbb{T})$ if
$$ \|f\|_{Q_{p}^{\beta}(\mathbb{T})}=\sup_{I\subset \mathbb{T}}\left(|I|^{2\beta-2-p}\int_{I} \int_{I}
\frac{|f(\zeta)-f(\eta)|^{2}}{|\zeta-\eta|^{4-p-2\beta}}|d\zeta||d\eta|
\right)^{1/2}<\infty,$$ where the supremum is taken over all arcs
$I\subset \mathbb{T}$.
\end{definition}
That $Q_{p}^{\beta}(\mathbb{T})$ can be seen as the subspaces of the
following $BMO$ type spaces.

\begin{definition} Let $1/2<\beta<1$. Define
$BMO^{\beta}(\mathbb{T})$ as the space of $f\in L^{2}(\mathbb{T})$
with
$$\|f\|_{BMO^{\beta}(\mathbb{T})}^{2}=\sup_{I\subset \mathbb{T}}|I|^{4\beta-5}\int_{I}|f(e^{is})-f_{I}|^{2}ds<\infty,$$
where the supremum is taken over all arcs $I\subset \mathbb{T}$.
\end{definition}

The following properties can be deduced from the definitions of
$BMO^{\beta}(\mathbb{T})$ and $Q_{p}^{\beta}(\mathbb{T})$
immediately (we refer the reader to Theorem 3.2 of \cite{LZ}).

\begin{property}
Given $p\in(0,1)$ and $\beta\in(1/2,1)$. Then
\begin{itemize}
\item[(i)] $Q^{\beta}_{p}(\mathbb{T})$ is increasing in $p$ for a fixed $\beta$, i.e.
$$Q^{\beta}_{p_{1}}(\mathbb{T})\subseteq Q^{\beta}_{p_{2}}(\mathbb{T}), \text{ if } p_{1}\leq p_{2};$$
\item[(ii)] $Q^{\beta}_{p}(\mathbb{T})\subseteq BMO^{\beta}(\mathbb{T}).$
\end{itemize}
\end{property}

Let
$d\mu_{z}(\zeta)=\frac{1-|z|^{2}}{|\zeta-z|^{2}}\frac{|d\zeta|}{2\pi}$
be the Poisson measure on $\mathbb{T}$. In addition, denote
$$\hat{f}(z)=\int_{\mathbb{T}}f(\zeta)d\mu_{z}(\zeta),\ \ z\in \mathbb{D}.$$
Similar to Lemma 7.1.1 of \cite{XJ2}, we can obtain the following lemma.
\begin{lemma}\label{le-main}
Given $p\in(0,1)$, $\beta\in(\frac{1}{2}, 1)$ and $p+2\beta>2$. Let
$I$ and $J$ be two arcs on $\mathbb{T}$ centered at
$\zeta_{0}=e^{is_{0}}$ with $|J|\geq 3|I|$. If $f\in
L^{2}(\mathbb{T})$, then
\begin{equation}
\begin{split}\nonumber
&\int_{S(I)}|\nabla
\hat{f}(z)|^{2}(1-|z|^{2})^{p-2+2\beta}dm(z)\\
\lesssim&
\int_{J}\int_{J}\frac{|f(e^{it})-f(e^{is})|^{2}}{|e^{it}-e^{is}|^{4-p-2\beta}}dtds
+|I|^{2\beta+p}\left(\int_{|t|\geq
2|J|/3}\frac{|f(e^{i(t+s_{0})})-f_{J}|}{t^{2}}dt\right)^{2},
\end{split}
\end{equation}
where $\nabla=(2\partial/\partial z, 2\partial
/\partial\overline{z})$ stands for the gradient vector.
\end{lemma}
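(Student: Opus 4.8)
The plan is to follow the strategy of Lemma 7.1.1 in Xiao's book \cite{XJ2}, adapting the exponents to the critical parameter $\beta$. Write $P(z,\zeta)=\frac{1-|z|^2}{|\zeta-z|^2}$ for the Poisson kernel, so that $\hat f(z)=\int_{\mathbb{T}}f(\zeta)P(z,\zeta)\frac{|d\zeta|}{2\pi}$. Since $\int_{\mathbb{T}}P(z,\zeta)\frac{|d\zeta|}{2\pi}=1$, for any constant $c$ we have $\hat f(z)-c=\int_{\mathbb{T}}(f(\zeta)-c)P(z,\zeta)\frac{|d\zeta|}{2\pi}$, and differentiating under the integral sign gives $\nabla\hat f(z)=\int_{\mathbb{T}}(f(\zeta)-c)\nabla_z P(z,\zeta)\frac{|d\zeta|}{2\pi}$; the natural choice here is $c=f_J$. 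The first step is therefore to split the circle as $\mathbb{T}=J\cup(\mathbb{T}\setminus J)$ and correspondingly write $\nabla\hat f(z)=G_1(z)+G_2(z)$, where $G_1$ integrates the difference $f(\zeta)-f_J$ against $\nabla_z P$ over $J$, and $G_2$ does so over $\mathbb{T}\setminus J$; then $|\nabla\hat f(z)|^2\lesssim |G_1(z)|^2+|G_2(z)|^2$ and we estimate the two resulting integrals over $S(I)$ separately.

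For the local term $G_1$, I would first replace the constant $f_J$ inside the integral by $f(e^{is})$ where $e^{is}$ ranges over $I$: more precisely, for $z\in S(I)$ the mean-value-type manipulation lets one bound $|G_1(z)|$ by a Poisson-type average of $|f(\zeta)-f_J|$ over $J$, but the cleaner route is to use the standard gradient estimate $|\nabla_z P(z,\zeta)|\lesssim |\zeta-z|^{-2}$ together with the identity $\hat f(z)-\widehat{(f\chi_J)}$-style decomposition, and then apply Cauchy--Schwarz in the form
\[
|G_1(z)|^2\lesssim \left(\int_J\frac{|f(\zeta)-f_J|}{|\zeta-z|^2}\frac{|d\zeta|}{2\pi}\right)^2.
\]
Integrating $(1-|z|^2)^{p-2+2\beta}$ against this over $S(I)$ and using Fubini, the inner $z$-integral $\int_{S(I)}(1-|z|^2)^{p-2+2\beta}|\zeta-z|^{-2}|\eta-z|^{-2}\,dm(z)$ is computed by the usual Forelli--Rudin / Carleson-box estimates; it produces a kernel comparable to $|\zeta-\eta|^{-(4-p-2\beta)}$ (this is exactly where the hypothesis $p+2\beta>2$, equivalently $4-p-2\beta<2$, keeps the exponent admissible and the integral convergent), yielding the first term $\int_J\int_J \frac{|f(e^{it})-f(e^{is})|^2}{|e^{it}-e^{is}|^{4-p-2\beta}}\,dt\,ds$ after one also controls $|f(\zeta)-f_J|^2\lesssim |f(\zeta)-f(\eta)|^2+|f(\eta)-f_J|^2$ and absorbs the averaged piece.

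For the far term $G_2$, note that when $z\in S(I)$ and $\zeta\in\mathbb{T}\setminus J$ we have $|\zeta-z|\approx|\zeta-\zeta_0|\gtrsim |J|$, and since $1-|z|^2\lesssim |I|$ on $S(I)$ one gets $|\nabla_z P(z,\zeta)|\lesssim |\zeta-\zeta_0|^{-2}$ uniformly. Parametrizing $\zeta=e^{i(t+s_0)}$ and using that $|\zeta-\zeta_0|\approx |t|$ for $|t|\lesssim\pi$, this bounds $|G_2(z)|$ by $\int_{|t|\geq 2|J|/3}\frac{|f(e^{i(t+s_0)})-f_J|}{t^2}\,dt$ uniformly in $z\in S(I)$; since $m(S(I))\approx |I|^2$ and $(1-|z|^2)^{p-2+2\beta}\lesssim |I|^{p-2+2\beta}$ there, integrating gives the factor $|I|^{2+p-2+2\beta}=|I|^{p+2\beta}$ multiplying the square of that tail integral, which is the second term in the claimed bound. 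I expect the main obstacle to be the careful bookkeeping in the local term: justifying the replacement of $f_J$ by pointwise values $f(e^{is})$ with $e^{is}\in I$ and showing that the resulting double integral over $I\times I$ can be enlarged to $J\times J$ while the leftover averages are dominated by the tail term — together with verifying that the Forelli--Rudin exponent count comes out to exactly $4-p-2\beta$ under the standing assumption $p+2\beta>2$. The far term and the Carleson-box volume estimates are routine once the kernel bounds are in place.
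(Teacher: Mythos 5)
Your far-term estimate for $G_2$ and the Carleson-box bookkeeping that produces the factor $|I|^{p+2\beta}$ are fine and essentially coincide with the paper's treatment of its piece $f_3=(f-f_J)\psi$. The gap is in the local term. After the pointwise bound $|G_1(z)|\lesssim\int_J|f(\zeta)-f_J|\,|\zeta-z|^{-2}\,|d\zeta|$, squaring, Fubini, and the Forelli--Rudin evaluation of $\int_{S(I)}(1-|z|^2)^{p-2+2\beta}|\zeta-z|^{-2}|\eta-z|^{-2}\,dm(z)\lesssim|\zeta-\eta|^{-(4-p-2\beta)}$, you are left with
\begin{equation*}
\int_J\int_J \frac{|f(\zeta)-f_J|\,|f(\eta)-f_J|}{|\zeta-\eta|^{4-p-2\beta}}\,|d\zeta|\,|d\eta| ,
\end{equation*}
and this is $+\infty$ whenever $f\neq f_J$ on a set of positive measure in $J$: since $0<p-2+2\beta<1$ we have $4-p-2\beta>1$, so $\int_J|\zeta-\eta|^{-(4-p-2\beta)}\,|d\eta|$ diverges for every interior $\zeta$, and your numerator does not vanish on the diagonal. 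The proposed substitution $|f(\zeta)-f_J|^2\lesssim|f(\zeta)-f(\eta)|^2+|f(\eta)-f_J|^2$ does not help, because the second term reproduces exactly the divergent integral. The root cause is that putting absolute values inside $G_1$ destroys the cancellation $\int_{\mathbb{T}}\nabla_zP(z,\zeta)\,|d\zeta|=0$, which is precisely what allows the double-difference kernel to appear with a finite constant.

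The paper circumvents this with two ingredients you would need to import. First, it replaces the sharp cutoff $\chi_J$ by a Lipschitz bump $\phi$ (equal to $1$ on $J/3$, supported in $2J/3$, with $|\phi(\zeta)-\phi(\eta)|\lesssim|\zeta-\eta|/|J|$) and writes $f=f_J+(f-f_J)\phi+(f-f_J)\psi$; the commutator term then carries the extra factor $|\zeta-\eta|/|J|$, which converts the diagonal singularity into the integrable power $|\zeta-\eta|^{p+2\beta-2}$. Second, and more essentially, the local piece is not handled by a pointwise kernel bound at all: one applies a global square-function inequality (Lemma 6.1.1 of Xiao's book, proved via Plancherel on Fourier coefficients) to $f_2=(f-f_J)\phi$, which yields $\int_{\mathbb{D}}|\nabla\hat f_2|^2(1-|z|^2)^{p-2+2\beta}\,dA\lesssim\int_{\mathbb{T}}\int_{\mathbb{T}}|f_2(\zeta)-f_2(\eta)|^2\,|\zeta-\eta|^{p+2\beta-4}\,|d\zeta|\,|d\eta|$ with the difference structure already in place. (Your first instinct --- subtracting the boundary value $f(e^{i\theta})$ at the radial projection of $z=re^{i\theta}$ instead of $f_J$, so that the numerator vanishes on the diagonal --- can also be made to work, but it is precisely the route you set aside in favor of the one that fails.) As written, the proposal does not establish the local estimate.
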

\begin{proof}
Without loss of generality, assume that $\zeta_{0}=1$ and $\phi$ is
a function with $0\leq \phi\leq 1$ such that
$$\begin{cases}
\phi=1\text{ on }\frac{J}{3};\\
\mbox{ supp }\phi\subset \frac{2J}{3};\\
|\phi(z)-\phi(w)|\lesssim \frac{|z-w|}{|J|}\text{ for all }z, w\in
T.
\end{cases}$$
 Writing
$\psi=1-\phi$, we then have
$$f=f_{J}+(f-f_{J})\phi+(f-f_{J})\psi=:f_{1}+f_{2}+f_{3}.$$
Note that $|\nabla \hat{f_{J}}|=0$, since $f_{1}$ is constant. So
$|\nabla \hat{f}|^{2}$ is dominated by $|\nabla \hat{f_{3}}|^{2}$
and $|\nabla \hat{f_{2}}|^{2}$. For $z=re^{i\theta}$ in the Carleson
box $S(I)$,
 \begin{equation}
 \begin{split}\nonumber
|\nabla \hat{f_{3}}(re^{i\theta})|&\lesssim
\int_{0}^{2\pi}\frac{|f_{3}(e^{it})|}{(1-r)^{2}+(\theta-t)^{2}}dt
\\&
\lesssim \int_{|t|>2|J|/3}|f(e^{it})-f_{J}|\frac{dt}{t^{2}}.
\end{split}
\end{equation}
So we get
 \begin{equation}
 \begin{split}\nonumber
&\int_{S(I)}|\nabla
\hat{f_{3}}(z)|^{2}(1-|z|^{2})^{p-2+2\beta}dA(z)\\
\lesssim& \int_{S(I)} \left( \int_{|t|>2|J|/3}|f(e^{it})-f_{J}|\frac{dt}{t^{2}} \right)^{2}(1-|z|^{2})^{p-2+2\beta}dA(z)\\
\lesssim& \left( \int_{|t|>2|J|/3}|f(e^{it})-f_{J}|\frac{dt}{t^{2}}
\right)^{2} \int_{0}^{|I|/2}d\theta\int_{0}^{|I|}r^{p-2+2\beta}rdr\\
\lesssim &|I|^{p+2\beta}\left(\int_{|t|>2|J|/3}|f(e^{it})-f_{J}|\frac{dt}{t^{2}} \right)^{2}.
\end{split}
\end{equation}

Now for the integral over $S(I)$ of $|\nabla \hat{f_{2}}|^{2}$, by Lemma 6.1.1 of \cite{XJ2},
 \begin{equation}
 \begin{split}\nonumber
\int_{S(I)}&|\nabla \hat{f_{2}}(z)|^{2}(1-|z|^{2})^{p-2+2\beta}dA(z)
\\ &\lesssim
\int_{\mathbb{T}}\int_{\mathbb{T}}\frac{|f_{2}(\zeta)-f_{2}(\eta)|^{2}}{|\zeta-\eta|^{4-p-2\beta}}|d\zeta||d\eta|\\
 & \lesssim\Big(\int_{\zeta\in J, \eta\in J}+\int_{\zeta \not\in J, \eta\in
\frac{3J}{4}}+\int_{\eta\in J, \zeta \in \frac{3J}{4}}\Big)
 \frac{|f_{2}(\zeta)-f_{2}(\eta)|^{2}}{|\zeta-\eta|^{4-p-2\beta}}|d\zeta||d\eta|\\
&=:M_{1}+M_{2}+M_{3}.
\end{split}
\end{equation}
Since
\begin{equation}
 \begin{split}\nonumber
|f_{2}(\zeta)-f_{2}(\eta)|&=|(f(\zeta)-f_{J})\phi(\zeta)-(f(\eta)-f_{J})\phi(\eta)|
\\&= |(f(\zeta)-f(\eta))\phi(\zeta)+f(\eta)\phi(\zeta)-f_{J}\phi(\eta)-(f(\eta)-f_{J})\phi(\eta)|
\\&
\lesssim
|f(\zeta)-f(\eta)||\phi(\zeta)|+|f(\eta)-f_{J}||\phi(\zeta)-\phi(\eta)|
\\&
\lesssim |f(\zeta)-f(\eta)|+|J|^{-1}|\zeta-\eta||f(\eta)-f_{J}|,
\end{split}
\end{equation}
we have
\begin{equation}
 \begin{split}\nonumber
M_{1}&=\int_{J}\int_{J}\frac{|f_{2}(\eta)-f_{2}(\zeta)|^{2}}{|\zeta-\eta|^{4-p-2\beta}}|d\zeta||d\eta|
\\&\lesssim \int_{J}\int_{J}\frac{|f(\eta)-f(\zeta)|^{2}}{|\zeta-\eta|^{4-p-2\beta} }|d\zeta||d\eta|
 +\int_{J}\int_{J}\frac{1}{|J|^{2}}\frac{|\zeta-\eta|^{2}|f(\eta)-f_{J}|^{2}}{|\zeta-\eta|^{4-p-2\beta}}
\\&\lesssim
\int_{J}\int_{J}\frac{|f(\eta)-f(\zeta)|^{2}}{|\zeta-\eta|^{4-p-2\beta}
}|d\zeta||d\eta| +
\frac{1}{|J|^{2}}\int_{J}\int_{J}|f(\eta)-f_{J}|^{2}|d\eta||\zeta-\eta|^{p-2+2\beta}|d\zeta|
\\&\lesssim
\int_{J}\int_{J}\frac{|f(\eta)-f(\zeta)|^{2}}{|\zeta-\eta|^{4-p-2\beta}
}|d\zeta||d\eta|
 +\frac{1}{|J|^{2}}\int_{J}|f(\eta)-f_{J}|^{2}|J|^{p-1+2\beta}|d\eta|.
\end{split}
\end{equation}
Note that
\begin{equation}
 \begin{split}\nonumber
\frac{1}{|J|^{2}}\int_{J}|f(\eta)-f_{J}|^{2}|J|^{p-1+2\beta}|d\eta|
=&\frac{1}{|J|^{3-p-2\beta}}\int_{J}|f(\eta)-f_{J}|^{2}|d\eta|\\
\lesssim& \frac{1}{|J|^{4-p-2\beta}}\int_{J}\int_{J}|f(\zeta)-f(\eta)|^{2}|d\zeta||d\eta|\\
\lesssim&\int_{J}\int_{J}\frac{|f(\zeta)-f(\eta)|^{2}}{|\zeta-\eta|^{4-p-2\beta}}|d\zeta||d\eta|,
\end{split}
\end{equation}
where we used a fact that
$$f(\eta)-f_{J}=\frac{1}{|J|}\int_{J}(f(\eta)-f(\zeta))d\zeta$$
and $|\zeta-\eta|\leq 2|J|$. We obtain
$$M_{1}\lesssim \int_{J}\int_{J}\frac{|f(\zeta)-f(\eta)|^{2}}{|\zeta-\eta|^{4-p-2\beta}}|d\zeta||d\eta|.$$

Now, we estimate $M_{2}$. Recall that
$$M_{2}=\int_{\zeta\not\in J}\int_{\eta\in \frac{3J}{4}}\frac{|f_{2}(\zeta)-f_{2}(\eta)|}{|\zeta-\eta|^{2-p+2\beta}}|d\zeta||d\eta|.$$
Since $\zeta\not\in J$, then $\phi(\zeta)=0$ and
$f_{2}(\zeta)=(f(\zeta)-f_{J})\phi(\zeta)=0$. This implies that
\begin{equation}
\begin{split}\nonumber
M_{2}&=\int_{\zeta\not\in J}\int_{\eta\in
\frac{3J}{4}}\frac{|f_{2}(\eta)|}{|\zeta-\eta|^{2-p+2\beta}}|d\zeta||d\eta|
\\&=\int_{\eta\in \frac{3J}{4}}|f_{2}(\eta)|^{2}|d\eta|\int_{\zeta\not\in
J}\frac{|d\zeta|}{|\zeta-\eta|^{4-p-2\beta}}.
\end{split}
\end{equation}
Assume $J=\{e^{i\theta}:\ |\theta-s_{0}|\leq h \}$. Since
$e^{is_{0}}=1$, then $s_{0}=0$. So $|J|=\frac{h}{2\pi}$ and
$|\theta|\leq 2\pi J$. Take $\zeta=e^{i\theta}\not \in J$. Then
$|\theta|>2\pi J$. If $\eta\in \frac{3J}{4}$, then
$\eta=e^{i\arg\eta}$ and $|\arg \eta|\leq\frac{3\pi}{2}|J|$. This
leads that $|\zeta-\eta|\geq \frac{\pi}{2}|J|$. Hence
\begin{equation}
\begin{split}\nonumber
M_{2}&\lesssim \int_{\eta\in
\frac{3J}{4}}|f_{2}(\eta)-f_{J}|^{2}\frac{|d\eta|}{|J|^{3-p-2\beta}}
\\&\lesssim \frac{1}{|J|^{4-p-2\beta}}\int_{\zeta\not\in
J}\int_{\eta\in \frac{3J}{4}}|f(\zeta)-f(\eta)|^{2}|d\zeta||d\eta|.
\end{split}
\end{equation}
For $M_{3}$, the proof is similarly and is so omitted.
\end{proof}

\begin{theorem}\label{th1}
Given $0<p<1$, $\beta\in(1/2, 1)$ and $p-2+2\beta>0$. Let $f\in BMO^{\beta}(\mathbb{T})$.
The following conditions are equivalent:
\begin{itemize}
\item[(1)]  $f\in Q_{p}^{\beta}(\mathbb{T})$;
\item [(2)] $$\sup_{I\subset \mathbb{T}}|I|^{2\beta-p-2}\int_{0}^{|I|}\left(\int_{I}|f(e^{i(s+t)})-f(e^{is})|^{2}ds\right)\frac{dt}{t^{4-p-2\beta}}<\infty,$$
where the supremum is taken over all arcs $I\subseteq \mathbb{T};$
\item[(3)] $|\nabla\hat{f}(z)|^{2}(1-|z|^{2})^{p-2+2\beta}dA(z)$
is a $(p+2-2\beta)$-Carleson measure.
\end{itemize}
\end{theorem}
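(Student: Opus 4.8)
The plan is to establish the chain $(1)\Leftrightarrow(2)$, $(2)\Rightarrow(3)$, $(3)\Rightarrow(2)$; throughout $\hat f$ denotes the Poisson extension, and we use freely that enlarging or shrinking an arc by a fixed constant factor changes none of the three quantities. The equivalence $(1)\Leftrightarrow(2)$ is an elementary change of variables: writing $\zeta=e^{i\theta}$, $\eta=e^{i\phi}$ with $\theta,\phi$ ranging over an arc $I$ one has $|\zeta-\eta|\approx|\theta-\phi|$, and the substitution $t=\theta-\phi$, $s=\phi$ together with symmetrization in $t\mapsto-t$ turns $\int_I\int_I|f(\zeta)-f(\eta)|^2|\zeta-\eta|^{-(4-p-2\beta)}|d\zeta||d\eta|$ into a constant times $\int_0^{|I|}\big(\int_{I\cap(I-t)}|f(e^{i(s+t)})-f(e^{is})|^2\,ds\big)\,t^{-(4-p-2\beta)}\,dt$. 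Since $I\cap(I-t)\subset I$, while for $s\in I$ and $t\in(0,|I|)$ the point $s+t$ lies in $2I$, the two functionals dominate one another on the scales $I$ and $2I$; multiplying by $|I|^{2\beta-p-2}\approx|2I|^{2\beta-p-2}$ and taking suprema over arcs yields the equivalence.

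For $(2)\Rightarrow(3)$ fix an arc $I$; it suffices to treat small $|I|$ (large $|I|$ being handled by the global bound $\int_{\mathbb{D}}|\nabla\hat f|^2(1-|z|^2)^{p-2+2\beta}\,dA\lesssim\|f\|_{Q^\beta_p(\mathbb{T})}^2$, valid since $p-2+2\beta>-1$). Apply Lemma \ref{le-main} with $J=3I$. By the change of variables above applied on $J$, the first term on the right of Lemma \ref{le-main} is a constant multiple of $\int_0^{|J|}\big(\int_J\cdots\big)\,t^{-(4-p-2\beta)}\,dt$, hence $\lesssim|J|^{p+2-2\beta}\approx|I|^{p+2-2\beta}$ by hypothesis (2). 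For the second term it is enough to show $\int_{|t|\ge2|I|}|f(e^{i(t+s_0)})-f_J|\,t^{-2}\,dt\lesssim|I|^{1-2\beta}$, for then that term is $\lesssim|I|^{2\beta+p}\cdot|I|^{2-4\beta}=|I|^{p+2-2\beta}$; this is the one point where the hypothesis $f\in BMO^\beta(\mathbb{T})$ enters. Decomposing $\{|t|\ge2|I|\}$ dyadically, the required estimate follows from $\int_{2^mJ}|f-f_J|\lesssim(2^m|I|)^{3-2\beta}\|f\|_{BMO^\beta(\mathbb{T})}$, itself a consequence of Cauchy--Schwarz, the defining inequality of $BMO^\beta$, and the telescoping bound $|f_{2^{m+1}J}-f_{2^mJ}|\lesssim(2^m|I|)^{2-2\beta}\|f\|_{BMO^\beta(\mathbb{T})}$ (which sums, dominated by its last term since $2-2\beta>0$); inserting it produces a geometric series of ratio $2^{1-2\beta}<1$ because $\beta>1/2$. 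Altogether $\mu(S(I))\lesssim|I|^{p+2-2\beta}$, i.e. (3).

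The main point is $(3)\Rightarrow(2)$. Fix an arc $I$, $s\in I$, and $t\in(0,|I|)$. Since $f(e^{i\theta})=\lim_{r\to1}\hat f(re^{i\theta})$ a.e., the fundamental theorem of calculus along radial and angular segments gives $|f(e^{i(s+t)})-f(e^{is})|\lesssim A(s,t)+A(s+t,t)+B(s,t)$, where $A(\sigma,t)=\int_{1-t}^1|\nabla\hat f(\rho e^{i\sigma})|\,d\rho$ and $B(s,t)=\int_0^t|\nabla\hat f((1-t)e^{i(s+\tau)})|\,d\tau$. For the angular piece $B$, Cauchy--Schwarz gives $B(s,t)^2\le t\int_0^t|\nabla\hat f((1-t)e^{i(s+\tau)})|^2\,d\tau$; rewriting $t\cdot t^{-(4-p-2\beta)}=t^{-1}(1-(1-t))^{p-2+2\beta}$ and applying Fubini in $(s,\tau)$ --- the level sets of $s+\tau$ having length $\lesssim t$, which cancels the $t^{-1}$ --- reduces $\int_0^{|I|}t^{-(4-p-2\beta)}\big(\int_I B(s,t)^2\,ds\big)\,dt$ to $\int_{S(cI)}|\nabla\hat f(z)|^2(1-|z|^2)^{p-2+2\beta}\,dA(z)\lesssim|I|^{p+2-2\beta}$ by (3). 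For the radial pieces $A$, a direct Cauchy--Schwarz with the weight $(1-\rho)^{p-2+2\beta}$ followed by Fubini produces a factor $\log\frac{|I|}{1-\rho}$, and this logarithm is genuinely not absorbable by a $(p+2-2\beta)$-Carleson measure when $p\le2\beta-1$ (a measure concentrated on a thin annulus over $I$ shows this). The remedy --- the key technical step --- is to split the radius $[1-t,1]$ into Whitney segments $[1-2^{-j}t,1-2^{-j-1}t]$, apply Cauchy--Schwarz on each with a weight $2^{j\varepsilon}$ for some $0<\varepsilon<3-p-2\beta$ (possible since $p+2\beta<3$), and only then use Fubini: each Whitney segment then meets only a bounded band of scales $t$, so $\log\frac{|I|}{1-\rho}$ is replaced by $\log2$, and again $\int_0^{|I|}t^{-(4-p-2\beta)}\big(\int_I A(s,t)^2\,ds\big)\,dt\lesssim\int_{S(cI)}|\nabla\hat f|^2(1-|z|^2)^{p-2+2\beta}\,dA\lesssim|I|^{p+2-2\beta}$. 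Combining the three bounds gives (2), which together with $(1)\Leftrightarrow(2)$ and $(2)\Rightarrow(3)$ completes the proof; the crux throughout is the Whitney decomposition in the radial direction, needed precisely to defeat the logarithm that the Carleson condition alone cannot control across the full parameter range.
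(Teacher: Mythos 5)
Your proposal is correct and follows essentially the same route as the paper: the change of variables for $(1)\Leftrightarrow(2)$, Lemma \ref{le-main} together with the dyadic $BMO^{\beta}$ tail estimate for the implication into $(3)$, and the radial/angular splitting via the fundamental theorem of calculus for $(3)\Rightarrow(2)$. The only substantive difference is that your Whitney decomposition of the radial segment makes explicit a weighted Hardy-type inequality, namely $\int_{0}^{|I|}t^{p+2\beta-4}\bigl(\int_{1-t}^{1}g(u)\,du\bigr)^{2}dt\lesssim\int_{0}^{|I|}t^{p+2\beta-2}g(1-t)^{2}\,dt$ (valid since $p+2\beta<3$), which the paper uses without justification in passing from $\int_{0}^{|I|}I_{1}^{2}t^{p+2\beta-4}\,dt$ to the Carleson integral over $S(4I)$.
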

\begin{proof}
We give the proof according the following order
(3)$\Rightarrow$(2)$\Rightarrow$(1)$\Rightarrow$(3).

 (3)$\Rightarrow$(2). Without loss of generality, assume that $I$ is
 the
interval $(0, |I|)$ with $|I|<\frac{1}{4}$. If $t\in (0, |I|)$, then
\begin{equation}
\begin{split}\nonumber
\left(\int_{3I}|f(e^{i(v+t)})-f(e^{iv})|^{2}dv\right)^{1/2}&\leq
2\int_{1-t}^{1}\left( \int_{4I}\Big|\frac{\partial \hat{f}}{\partial
n}(ue^{is})\Big|^{2}ds\right)^{1/2}du \\
&+ 2\int_{0}^{t}\left( \int_{4I}\Big|\frac{\partial
\hat{f}}{\partial
\theta}((1-t)e^{is})\Big|^{2}ds\right)^{1/2}du\\
&=:I_{1}+I_{2}.
\end{split}
\end{equation}
Since
\begin{equation}
\begin{split}\nonumber
\int_{0}^{|I|}\frac{I_{1}^{2}}{t^{4-p-2\beta}}dt&\lesssim
\int_{0}^{|I|}t^{p+2\beta-2}\left(\int_{4I}\Big|\frac{\partial
\hat{f}}{\partial n}((1-t)e^{is})\Big|^{2}ds \right)dt
\\&\lesssim \int_{S(4I)}|\nabla
\hat{f}(z)|^{2}(1-|z|)^{p+2\beta-2}dA(z),
\end{split}
\end{equation}
and
\begin{equation}
\begin{split}\nonumber
\int_{0}^{|I|}\frac{I_{2}^{2}}{t^{4-p-2\beta}}dt&\lesssim
\int_{0}^{|I|}t^{p+2\beta-2}\left(\int_{4I}\Big|\frac{\partial
\hat{f}}{\partial \theta}(re^{is})\Big|^{2}ds \right)dt
\\&\lesssim \int_{S(4I)}|\nabla
\hat{f}(z)|^{2}(1-|z|)^{p+2\beta-2}dA(z).
\end{split}
\end{equation}
We have
 \begin{equation}
\begin{split}\nonumber
&\frac{1}{|I|^{p+2-2\beta}}\int_{0}^{|I|}
\left(\int_{I}|f(e^{i(\theta+t)})-f(e^{i\theta})|^{2}d\theta\right)\frac{dt}{t^{4-p-2\beta}}\\
\lesssim&\frac{1}{|I|^{p+2-2\beta}}\int_{0}^{|I|}\int_{3I}
|f(e^{i(\theta+t)})-f(e^{i\theta})|^{2}d\theta\frac{dt}{t^{4-p-2\beta}}\\
\lesssim&\frac{1}{|I|^{p+2-2\beta}}\int_{0}^{|I|}
\Big(I_{1}^{2}+I_{2}^{2}\Big)\frac{dt}{t^{4-p-2\beta}}\\
\lesssim& \frac{1}{|I|^{p+2-2\beta}}\int_{S(4I)}|\nabla
\hat{f}(z)|^{2}(1-|z|)^{p+2\beta-2}dA(z)\leq C,
\end{split}
\end{equation}
where we used a fact that $|\nabla
\hat{f}(z)|^{2}(1-|z|)^{p+2\beta-2}dA(z)$ is a
$(p+2-2\beta)$-Carleson measure.

(2)$\Rightarrow$(1). Without loss of generality, we assume that $I$
is a small subarc of $\mathbb{T}$ (say $|I|\leq 1/4$) and assume
that $I=(a, b)\subset [0, 2\pi]$.  We get
 \begin{equation}
\begin{split}\nonumber
\int_{I}\int_{I}\frac{|f(e^{is})-f(e^{it})|^{2}}{|e^{is}-e^{it}|^{4-p-2\beta}}dsdt
&\lesssim
\int_{a}^{b}\int_{a<s+t<b}\frac{|f(e^{i(s+t)}-f(e^{is}))|^{2}}{|t|^{4-p-2\beta}}dtds\\
&=\int_{a}^{b}\left(
\int_{a-s}^{0}+\int_{0}^{b-s}\right)\frac{|f(e^{i(s+t)}-f(e^{is}))|^{2}}{|t|^{4-p-2\beta}}dtds\\
 &\lesssim \int_{0}^{b-a}\frac{1}{\theta^{4-p-2\beta}}\left(\int_{I}|f(e^{i(s+\theta)}-f(e^{is}))|^{2}
ds\right)d\theta\\
 &\lesssim |I|^{p+2-2\beta}.
\end{split}
\end{equation}
Hence $f\in Q_{p}^{\beta}(\mathbb{T})$.

(1)$\Rightarrow$(3). Let $e^{is}, e^{it}\in I$, then
$|e^{is}-e^{it}|\leq |I|$. We have
 \begin{equation}
\begin{split}\nonumber
\int_{I}\int_{I}|f(e^{is})-f(e^{it})|^{2}dsdt&\leq
|I|^{4-p-2\beta}\int_{I}\int_{I}\frac{|f(e^{is})-f(e^{it})|^{2}}{|e^{is}-e^{it}|^{4-p-2\beta}}dsdt
\\& \lesssim |I|^{6-4\beta} \|f\|_{Q_{p}^{\beta}(\mathbb{T})}^{2}.
\end{split}
\end{equation}
So
 \begin{equation}
\begin{split}\nonumber
\int_{I}|f(e^{is})-f_{I}|ds&\leq
\int_{I}\frac{1}{|I|}\int_{I}|f(e^{is})-f(e^{it})|dtds
\\ &\leq \frac{1}{|I|}\int_{I}\int_{I}|f(e^{is})-f(e^{it})|^{2}dtds
\\&\lesssim |I|^{5-4\beta}\|f\|_{Q_{p}^{\beta}(\mathbb{T})}^{2}.
\end{split}
\end{equation}

Let $J=3I$ and $|I|<1/3$. Using Lemma \ref{le-main} (for
$\zeta_{0}=0$), we only need to show
\begin{align}\label{eq3.1}
\int_{|t|\geq
2|J|/3}|f(e^{i(t+s_{0})})-f_{J}|\frac{dt}{t^{2}}\lesssim
|I|^{1-2\beta}\|f\|_{BMO^{\beta}(T)}.
\end{align}
It is easy to see that

\begin{align}\label{eq3.2}
\int_{|t|\geq |J|/3}|f(e^{it})-f_{J}|\frac{dt}{t^{2}}&\leq
\sum_{k=1}^{\infty}\int_{3^{k-1}|J|<|t|\leq
3^{k}|J|}|f(e^{it})-f_{J}|\frac{dt}{t^{2}}\nonumber
\\ &\lesssim \sum_{k=1}^{\infty}\frac{1}{(3^{k}|J|)^{2}}\int_{|t|\leq
3^{k}|J|}|f(e^{it})-f_{J}|dt\nonumber
\\&\lesssim\sum_{k=1}^{\infty}\frac{1}{(3^{k}|J|)^{2}}\int_{|t|\leq
3^{k}|J|}|f(e^{it})-f_{3^{k+1}J}|dt
+\sum_{k=1}^{\infty}\frac{|f_{3^{k+1}J}-f_{J}|}{3^{k}|J|}\nonumber
\\&=: M_{1}+M_{2}.
\end{align}

At first, we estimate the term $M_{1}$. Since
\begin{align}\label{eq3.3}
M_{1}&=\sum_{k=1}^{\infty}\frac{1}{(3^{k}|J|)^{2}}\int_{|t|\leq
3^{k}|J|}|f(e^{it})-f_{3^{k+1}J}|dt\nonumber
\\ &\lesssim \sum_{k=1}^{\infty}\frac{1}{(3^{k}|J|)}\left( \frac{1}{(3^{k}|J|)}\int_{|t|\leq
3^{k}|J|}|f(e^{it})-f_{3^{k+1}J}|^{2}dt\right)^{1/2}\nonumber
\\
&=\sum_{k=1}^{\infty}(3^{k}|J|)^{1-2\beta}\|f\|_{BMO^{\beta}(\mathbb{T})}\nonumber\\
&\lesssim |J|^{1-2\beta}\|f\|_{BMO^{\beta}(\mathbb{T})}.
\end{align}

We next deal with the term $M_{2}$. By the definition of
$BMO^{\beta}(\mathbb{R}^{n})$, for any $I$, $J\subset \mathbb{T}$
with $I\subset J$ and $|J|\leq 3|I|$,
\begin{equation}\nonumber
\begin{split}
|f_{I}-f_{J}|&\lesssim \frac{1}{|I|}\int_{I}|f(e^{is})-f_{J}|ds\\
&\lesssim\Big(\frac{1}{|I|}\int_{I}|f(e^{is})-f_{J}|^{2}ds\Big)^{1/2}\\
&\lesssim|J|^{2-2\beta}\|f\|_{BMO^{\beta}(\mathbb{T})}.
\end{split}
\end{equation}
Since $\beta>1/2$, we obtain
\begin{align}\label{eq3.4}
M_{2}=&\sum_{k=1}^{\infty}\frac{|f_{3^{k+1}J}-f_{J}|}{3^{k}|J|}\nonumber\\
\lesssim&\sum_{k=1}^{\infty}\frac{1}{3^{k}|J|}\Big(|f_{3^{k+1}J}-f_{3^{k}J}|+|f_{3^{k}J}-f_{3^{k-1}J}|\cdots
+|f_{3J}-f_{J}|\Big)\nonumber\\
\lesssim&\sum_{k=1}^{\infty}\frac{k+1}{3^{k}|J|}(3^{k+1}|J|)^{2-2\beta}\|f\|_{BMO^{\beta}(\mathbb{T})}\nonumber\\
\lesssim&|I|^{1-2\beta}\|f\|_{BMO^{\beta}(\mathbb{T})}.
\end{align}

So $(\ref{eq3.1})$ follows from $(\ref{eq3.2})$, $(\ref{eq3.3})$ and
$(\ref{eq3.4})$. The proof of this theorem is completed by Lemma
\ref{le-main}.
\end{proof}

Let $C^{1}(\mathbb{D})$ be the space of complex value functions
which is continuously differentiable on $\mathbb{D}$. Then we have
the following corollary.

\begin{corollary}
Given $0<p<1$, $1/2<\beta<1$ and $p+2\beta>2$. Let $F$ be a function
defined on $\overline{\mathbb{D}}$ such that $F\in
C^{1}(\mathbb{D})$ and $F|_{\mathbb{T}}=f$. If $|\nabla
F(z)|^{2}(1-|z|^{2})^{p}dm(z)$ is a $(p+2-2\beta)$-Carleson measure,
then $f\in Q_{p}^{\beta}(\mathbb{T})$.
\end{corollary}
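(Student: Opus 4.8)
The plan is to reduce the corollary to part~(3) of Theorem~\ref{th1} by comparing the gradient of the arbitrary $C^{1}$ extension $F$ with the gradient of the harmonic (Poisson) extension $\hat{f}$. The key observation is that, since $F|_{\mathbb{T}}=f$, the harmonic extension $\hat{f}$ is the Poisson integral of the same boundary function $f$, and the harmonic function $\hat{f}$ minimizes the Dirichlet-type energy among all extensions of $f$. More precisely, I would first invoke the reproducing/extension property: $\hat{f}$ is the \emph{harmonic} extension of $f=F|_{\mathbb{T}}$, so on each Carleson box $S(I)$ the quantity $\int_{S(I)}|\nabla\hat{f}(z)|^{2}(1-|z|^{2})^{p-2+2\beta}\,dA(z)$ is controlled by the corresponding integral for \emph{any} competing extension, in particular for $F$ itself.

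The main technical step is to establish the comparison
\begin{equation}\nonumber
\int_{S(I)}|\nabla\hat{f}(z)|^{2}(1-|z|^{2})^{p-2+2\beta}\,dA(z)
\lesssim
\int_{S(I)}|\nabla F(z)|^{2}(1-|z|^{2})^{p-2+2\beta}\,dA(z).
\end{equation}
To obtain this, I would route through the boundary double-integral characterization rather than attempting a direct pointwise domination of $|\nabla\hat f|$ by $|\nabla F|$ (which is false pointwise). Specifically, applying Lemma~\ref{le-main} with $J=3I$ bounds the left-hand side by $\int_{J}\int_{J}|f(e^{it})-f(e^{is})|^{2}|e^{it}-e^{is}|^{-(4-p-2\beta)}\,dt\,ds$ plus the tail term already controlled in the proof of Theorem~\ref{th1} via the $BMO^{\beta}(\mathbb{T})$ hypothesis. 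The boundary differences $f(e^{it})-f(e^{is})=F(e^{it})-F(e^{is})$ are in turn estimated by integrating $\nabla F$ along a path from $e^{is}$ to $e^{it}$ that dips into $\mathbb{D}$ to depth comparable to $|e^{it}-e^{is}|$; squaring, applying Cauchy--Schwarz, and integrating against the kernel $|e^{it}-e^{is}|^{-(4-p-2\beta)}$ then reproduces exactly the weighted area integral of $|\nabla F|^{2}$ with weight $(1-|z|^{2})^{p-2+2\beta}$ over a slightly enlarged box. This is the standard trace-inequality mechanism, and it is the heart of the argument.

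Granting the comparison, the conclusion is immediate: the hypothesis states that $|\nabla F(z)|^{2}(1-|z|^{2})^{p}\,dm(z)$ is a $(p+2-2\beta)$-Carleson measure, and since on $S(I)$ we have $(1-|z|^{2})^{p}\approx (1-|z|^{2})^{p-2+2\beta}\,(1-|z|^{2})^{2-2\beta}$ with $|I|^{2-2\beta}$ absorbed into the Carleson normalization by readjusting the exponent bookkeeping, the weighted measure built from $\nabla F$ dominates, up to the $F$-Carleson constant, the corresponding $\hat{f}$-quantity. Hence $|\nabla\hat{f}(z)|^{2}(1-|z|^{2})^{p-2+2\beta}\,dA(z)$ is a $(p+2-2\beta)$-Carleson measure, which is precisely condition~(3) of Theorem~\ref{th1}; therefore $f\in Q_{p}^{\beta}(\mathbb{T})$.

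\textbf{Main obstacle.} I expect the delicate point to be reconciling the two weight exponents appearing in the problem: the hypothesis is phrased with weight $(1-|z|^{2})^{p}$ against a $(p+2-2\beta)$-Carleson condition, whereas Theorem~\ref{th1}(3) uses weight $(1-|z|^{2})^{p-2+2\beta}$ against the same Carleson index. Making the passage between these two formulations rigorous — showing that the trace inequality together with the $BMO^{\beta}(\mathbb{T})$ assumption converts the $\nabla F$ Carleson bound into the $\nabla\hat f$ Carleson bound with the \emph{correct} exponent $p-2+2\beta$ — is where the geometry of the path integral and the precise powers of $|I|$ must be tracked carefully, and it is the step most likely to require the full strength of Lemma~\ref{le-main}'s tail estimate rather than a soft comparison.
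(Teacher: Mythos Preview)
The paper gives no proof of this corollary; it is stated bare immediately after Theorem~\ref{th1}. The intended argument is almost certainly shorter than yours: inspect the proof of $(3)\Rightarrow(2)\Rightarrow(1)$ in Theorem~\ref{th1} and observe that the path-integral estimates bounding $I_1$, $I_2$ use only that $\hat f$ is a $C^{1}$ extension of $f$ to $\mathbb{D}$, never harmonicity. Replacing $\hat f$ by $F$ verbatim yields condition~(2), hence~(1), directly from the $F$-Carleson hypothesis. Your route---using Lemma~\ref{le-main} to climb back to the $\hat f$-Carleson measure and then invoking Theorem~\ref{th1}(3)---is a detour through the full equivalence, and it forces you to control the tail term in Lemma~\ref{le-main}, which in turn requires $f\in BMO^{\beta}(\mathbb{T})$. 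The corollary does not assume this, and the direct $(3)\Rightarrow(2)\Rightarrow(1)$ argument never needs it.

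You are right to flag the weight exponent as the real obstacle, but your proposed resolution does not work. On $S(I)$ one has $(1-|z|^{2})^{2-2\beta}\le |I|^{2-2\beta}$ since $2-2\beta>0$, so $(1-|z|^{2})^{p}\le |I|^{2-2\beta}(1-|z|^{2})^{p-2+2\beta}$; this inequality goes the \emph{wrong} way for converting the weight-$p$ hypothesis into the weight-$(p-2+2\beta)$ condition that both Theorem~\ref{th1}(3) and the $(3)\Rightarrow(2)$ computation actually require. There is no ``readjusting the exponent bookkeeping'' that fixes this: the weight-$p$ Carleson condition is genuinely weaker. The exponent $p$ in the corollary is almost certainly a typographical slip for $p-2+2\beta$; with that correction the corollary is immediate from the observation in the previous paragraph, and without it the statement as written is not recoverable by the method you outline.
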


 It is easy to see that $|\nabla \hat{f}|^{2}=4|f'(z)|$ when $f\in H(\mathbb{D})$.  Theorem \ref{th1}
suggests us to define $Q_{p}^{\beta}(\mathbb{D})$ spaces as
following.

\begin{definition}
 For $0<p<1$ and
$1/2<\beta\leq 1$. The $Q_{p}^{\beta}(\mathbb{D})$ stands for the
space of all $f\in H(\mathbb{D})$ satisfying
$$\sup_{|I|\subset \mathbb{T}}\frac{1}{|I|^{p+2-2\beta}}\int_{S(I)}|f'(z)|^{2}(1-|z|^{2})^{p-2+2\beta}dA(z)<\infty.$$
\end{definition}

Theorem \ref{th1} gives the answer to the Question. In fact, the
$Q_{p}^{\beta}(\mathbb{D})$ is the space of those analytic functions
in $\mathbb{D}$ which are the Poisson integral of a function $\psi$
with $\psi \in Q_{p}^{\beta}(\mathbb{T})$ (for related result, see
Theorem 3.1 of \cite{DP} and Proposition 5.1 of \cite{GD}).

\begin{remark}
In fact, by Lemma 1, we can prove that $Q_{p}^{\beta}(\mathbb{D})$
can be defined equivalently as
$$\|f\|_{Q_{p}^{\beta}(\mathbb{D})}=|f(0)|+\sup_{a\in
\mathbb{D}}\left(\int_{\mathbb{D}}|f'(z)|^{2}(1-|z|^{2})^{4\beta-4}(1-|\sigma_{a}(z)|^{2})^{p+2-2\beta}dA(z)
\right)^{1/2}.$$
\end{remark}

\section{\bf  $Q_{p}^{\beta}(\mathbb{D})$ and Morrey spaces }
In this section, we establish a relation between
$Q^{\beta}_{p}(\mathbb{D})$ and Morrey spaces. The Morrey spaces are
defined as follows.
\begin{definition}
Given $0< \lambda\leq 1$. The Morrey space $\mathcal{L}^{2,
\lambda}(\mathbb{D})$ is defined as the set of all $f$ which belong
to Hardy space $ H^{2}(\mathbb{D})$ and satisfy
$$\sup_{I\subset \mathbb{T}}\frac{1}{|I|^{\lambda}}\int_{I}|f(\zeta)-f_{I}|^{2}\frac{|d\zeta|}{2\pi}< \infty,$$
where the supremum is taken over all open subarcs $I\subset
\mathbb{T}$.
\end{definition}

Clearly, $\mathcal{L}^{2, 1}(\mathbb{D})= BMOA(\mathbb{D})$.

In \cite{WZ}, Z. Wu proved a Carleson measure characterization of
Morrey spaces. A similar result on the half plane can be found in
\cite[Theorem 1]{WX}.

\begin{lemma}\label{lem3}{\rm(Lemma 4.1 of \cite{WZ})}
Let $0<\lambda\leq 1$ and $f\in H(\mathbb{D})$. Then $f$ belongs to
$\mathcal{L}^{2, \lambda}(\mathbb{D})$ if and only if
$|f'(z)|^{2}(1-|z|^{2})dA(z)$ is a $\lambda$-Carleson measure.
\end{lemma}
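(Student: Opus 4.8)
The plan is to prove both implications by transferring the mean oscillation of $f$ over an arc $I\subset\mathbb{T}$ into a weighted Dirichlet integral of $f'$ over the Carleson box $S(I)$. Two facts will be used throughout: the Littlewood--Paley identity $\int_{\mathbb{D}}|g'(z)|^{2}(1-|z|^{2})\,dA(z)\approx\|g-g(0)\|_{H^{2}}^{2}$, valid for every $g\in H(\mathbb{D})$ (both sides possibly infinite), and the conformal invariance of this Dirichlet integral. For the latter, I would note the pointwise identity $|\sigma_{a}'(w)|=(1-|\sigma_{a}(w)|^{2})/(1-|w|^{2})$, so that with $g=f\circ\sigma_{a}$ the change of variables $z=\sigma_{a}(w)$ gives $\int_{\mathbb{D}}|g'(w)|^{2}(1-|w|^{2})\,dA(w)=\int_{\mathbb{D}}|f'(z)|^{2}(1-|\sigma_{a}(z)|^{2})\,dA(z)$, while the analogous substitution on $\mathbb{T}$ turns $\|g-g(0)\|_{H^{2}}^{2}$ into $\int_{\mathbb{T}}|f(\zeta)-f(a)|^{2}\,d\mu_{a}(\zeta)$. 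Writing $d\mu(z)=|f'(z)|^{2}(1-|z|^{2})\,dA(z)$, this yields
$$\int_{\mathbb{T}}|f(\zeta)-f(a)|^{2}\,d\mu_{a}(\zeta)\approx\int_{\mathbb{D}}\frac{1-|a|^{2}}{|1-\overline{a}z|^{2}}\,d\mu(z).$$
I will also use that the Cauchy (Riesz) projection is bounded on $L^{2}(\mathbb{T})$, and the arc-telescoping estimates already carried out in the proof of Theorem \ref{th1}.

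$(\Leftarrow)$ Assume $d\mu$ is a $\lambda$-Carleson measure; taking $I=\mathbb{T}$ gives $\int_{\mathbb{D}}|f'|^{2}(1-|z|^{2})\,dA<\infty$, hence $f\in H^{2}(\mathbb{D})$. Fix an arc $I$ with centre $\zeta_{I}$ and set $a=a_{I}=(1-|I|)\zeta_{I}$. Then $d\mu_{a}(\zeta)\approx|I|^{-1}|d\zeta|$ for $\zeta\in I$, and the elementary bound $\int_{I}|f-f_{I}|^{2}\le 4\int_{I}|f-f(a)|^{2}$ together with the displayed equivalence gives
$$\frac{1}{|I|^{\lambda}}\int_{I}|f(\zeta)-f_{I}|^{2}\frac{|d\zeta|}{2\pi}\lesssim\frac{1}{|I|^{\lambda-1}}\int_{\mathbb{D}}\frac{1-|a|^{2}}{|1-\overline{a}z|^{2}}\,d\mu(z).$$
It then remains to show the right-hand side is $O(1)$. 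I would decompose $\mathbb{D}$ into the box $S(2I)$ and the dyadic shells $S(2^{k+1}I)\setminus S(2^{k}I)$, $k\ge1$: on the $k$-th shell $|1-\overline{a}z|\gtrsim 2^{k}|I|$, so the integrand is $\lesssim(4^{k}|I|)^{-1}|I|$, while $\mu(S(2^{k+1}I))\lesssim(2^{k}|I|)^{\lambda}$; summing $\sum_{k\ge0}2^{k(\lambda-2)}<\infty$ (here $\lambda\le1<2$) gives the bound $\lesssim|I|^{\lambda-1}$, as needed.

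$(\Rightarrow)$ Assume $f\in\mathcal{L}^{2,\lambda}(\mathbb{D})$ and fix $I$; the goal is $\mu(S(I))\lesssim|I|^{\lambda}$. For $z\in S(I)$ I would write $f'(z)$ via the Cauchy formula with the constant $f_{2I}$ subtracted, splitting the contour integral as $f'(z)=f'_{\mathrm{loc}}(z)+f'_{\mathrm{glob}}(z)$ according to $\zeta\in 2I$ and $\zeta\notin 2I$. For the local part, $f'_{\mathrm{loc}}=F'$ with $F$ the Riesz projection of $(f-f_{2I})\mathbf{1}_{2I}$, so $\|F\|_{H^{2}}\lesssim\big(\int_{2I}|f-f_{2I}|^{2}\big)^{1/2}\lesssim|I|^{\lambda/2}$ by the Morrey condition, and by Littlewood--Paley $\int_{S(I)}|f'_{\mathrm{loc}}|^{2}(1-|z|^{2})\,dA\le\int_{\mathbb{D}}|F'|^{2}(1-|z|^{2})\,dA\lesssim|I|^{\lambda}$. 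For the global part, $|\zeta-z|\gtrsim|\zeta-\zeta_{I}|$ when $z\in S(I)$ and $\zeta\notin 2I$, so $|f'_{\mathrm{glob}}(z)|\lesssim T:=\int_{\mathbb{T}\setminus 2I}|f(\zeta)-f_{2I}|\,|\zeta-\zeta_{I}|^{-2}\,|d\zeta|$ uniformly in $z\in S(I)$; breaking $\mathbb{T}\setminus 2I$ into dyadic annuli around $\zeta_{I}$ and telescoping the averages $f_{2^{k}I}$ exactly as in the estimates of $M_{1},M_{2}$ in the proof of Theorem \ref{th1} yields $T\lesssim|I|^{(\lambda-3)/2}$, whence $\int_{S(I)}|f'_{\mathrm{glob}}|^{2}(1-|z|^{2})\,dA\lesssim T^{2}\int_{S(I)}(1-|z|^{2})\,dA\lesssim|I|^{\lambda-3}|I|^{3}=|I|^{\lambda}$. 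Adding the two contributions gives $\mu(S(I))\lesssim|I|^{\lambda}$.

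The main obstacle is the bookkeeping in the global estimate of $(\Rightarrow)$: one must verify that the two series generated by the dyadic/telescoping decomposition of $T$ converge, which holds because $\lambda<3$ and, for the telescoped averages, $\lambda\le1$; at the endpoint $\lambda=1$ the telescoping produces an extra linear factor $k$, but the geometric factor $3^{-k}$ still forces convergence (and that case is anyway the classical $BMOA$ Carleson characterization). Everything else is a routine change of variables; in particular, for $\lambda=1$ the bound obtained in $(\Leftarrow)$ is exactly the content of Lemma \ref{lem1}.
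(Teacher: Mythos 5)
The paper does not prove this lemma at all: it is imported verbatim as Lemma 4.1 of \cite{WZ}, so any proof you give is necessarily a different route from the paper's. Your argument is correct and self-contained, and it is essentially the classical Fefferman--Stein-type proof of the Carleson-measure characterization of $BMOA$ adapted to the Morrey exponent $\lambda$; structurally it parallels the proof of Theorem \ref{th1} in the paper (local/global splitting on an enlarged arc, dyadic shells, telescoping of averages), whereas \cite{WZ} derives the lemma from a different ``new characterization'' of Carleson measures. Two small points to tidy up. First, in the $(\Leftarrow)$ direction the bound on the $k$-th shell should read $\frac{1-|a|^{2}}{|1-\overline{a}z|^{2}}\lesssim\frac{|I|}{(2^{k}|I|)^{2}}=\frac{1}{4^{k}|I|}$, not $(4^{k}|I|)^{-1}|I|$; your subsequent summand $2^{k(\lambda-2)}|I|^{\lambda-1}$ and the conclusion are consistent with the corrected bound, so this is only a typo. (This direction could also be shortened by invoking the standard equivalence behind Lemma \ref{lem1}: $\mu$ is $\lambda$-Carleson iff $\sup_{a}(1-|a|^{2})^{1-\lambda}\int_{\mathbb{D}}\frac{1-|a|^{2}}{|1-\overline{a}z|^{2}}\,d\mu(z)<\infty$ when $\lambda\le 1$.) Second, in the $(\Rightarrow)$ direction the inequality $|\zeta-z|\gtrsim|\zeta-\zeta_{I}|$ for $z\in S(I)$ requires $\zeta$ to lie outside $MI$ for a sufficiently large absolute constant $M$ (not necessarily $M=2$); replacing $2I$ by such an $MI$ throughout costs nothing and is the standard fix. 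With these cosmetic adjustments the proof is complete, including the endpoint $\lambda=1$ where the telescoping produces the extra factor $k$ that you correctly absorb into the geometric decay.
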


For  the proof of the main result of this section, we need the
following technical lemma which can be compared with Theorem 3 of
\cite{WX}. For a fixed $b>1$ and a measurable function $\psi$,
define the linear operator $T_{\sigma}$ as
$$T_{\sigma}\psi(w)=\int_{\mathbb{D}}\frac{(1-|z|^{2})^{b-1}}{|1-\overline{z}w|^{b+\sigma}}\psi(z)dA(z), \ \ \sigma>0.$$

\begin{lemma}\label{lem2}
Let $0<p\leq 1$, $0<\beta\leq 1$, $\sigma>\max\{(3-p-2\beta)/2,
2-2\beta\}$, $p+2\beta>2$ and $\psi$ be a measureable function on
$\mathbb{D}$. If $|\psi(z)|^{2}(1-|z|^{2})^{p-2+2\beta}dA(z)$ is a
$(p+2-2\beta)$-Carleson measure, then
$|T_{\sigma}\psi(z)|^{2}(1-|z|^{2})^{2\sigma+p+2\beta-4}dA(z)$ is
also a $(p+2-2\beta)$-Carleson measure.
\end{lemma}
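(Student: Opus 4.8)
The plan is to verify the $(p+2-2\beta)$-Carleson property of the output measure through its Poisson-type description in Lemma \ref{lem1}, after which everything reduces to Forelli--Rudin type integral estimates combined with the Carleson hypothesis on the input measure. Write $A=p+2\beta-2>0$, $q=p+2-2\beta\in(0,2)$, and $d\nu(z)=|\psi(z)|^{2}(1-|z|^{2})^{A}\,dA(z)$, a $q$-Carleson measure by assumption. By Lemma \ref{lem1} it suffices to show
\[
\sup_{a\in\mathbb{D}}\,(1-|a|^{2})^{q}\int_{\mathbb{D}}\frac{|T_{\sigma}\psi(w)|^{2}(1-|w|^{2})^{2\sigma+A-2}}{|1-\overline{a}w|^{2q}}\,dA(w)<\infty.
\]
Fix a small $\tau>0$ with $2(b-1)-A-2\tau>-1$, and a parameter $\delta$, to be chosen, with $2\tau+1<\delta<2\sigma+A+2\tau$. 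Splitting $(1-|z|^{2})^{b-1}$ and $|1-\overline{z}w|^{-(b+\sigma)}$ into two factors and applying the Cauchy--Schwarz inequality to the integral defining $T_{\sigma}\psi(w)$ gives
\[
|T_{\sigma}\psi(w)|^{2}\le\Bigl(\int_{\mathbb{D}}\frac{(1-|z|^{2})^{2(b-1)-A-2\tau}}{|1-\overline{z}w|^{2(b+\sigma)-\delta}}\,dA(z)\Bigr)\Bigl(\int_{\mathbb{D}}\frac{d\nu_{\tau}(z)}{|1-\overline{z}w|^{\delta}}\Bigr),
\]
with $d\nu_{\tau}(z)=(1-|z|^{2})^{2\tau}\,d\nu(z)$; since $(1-|z|^{2})^{2\tau}\lesssim|I|^{2\tau}$ on $S(I)$, the measure $\nu_{\tau}$ is $(q+2\tau)$-Carleson.

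The first factor is purely geometric; by the Forelli--Rudin estimate (applicable because $2(b-1)-A-2\tau>-1$ and $\delta<2\sigma+A+2\tau$) it is comparable to $(1-|w|^{2})^{\delta-A-2\tau-2\sigma}$. Hence
\[
|T_{\sigma}\psi(w)|^{2}(1-|w|^{2})^{2\sigma+A-2}\lesssim(1-|w|^{2})^{\delta-2\tau-2}\int_{\mathbb{D}}\frac{d\nu_{\tau}(z)}{|1-\overline{z}w|^{\delta}},
\]
and inserting this above and applying Fubini's theorem reduces the matter to estimating
\[
(1-|a|^{2})^{q}\int_{\mathbb{D}}\Bigl(\int_{\mathbb{D}}\frac{(1-|w|^{2})^{\delta-2\tau-2}}{|1-\overline{a}w|^{2q}\,|1-\overline{z}w|^{\delta}}\,dA(w)\Bigr)d\nu_{\tau}(z).
\]

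For the inner $w$-integral I use the standard two-point integral estimate. Its weight exponent is $c=\delta-2\tau-2>-1$, while the exponent $\delta$ of the pole at $z$ equals $c+2+2\tau$, so it \emph{strictly} exceeds $c+2$; this is exactly why the perturbation $\tau>0$ is inserted, for at $\tau=0$ one would land on the logarithmic borderline $\delta=c+2$. Depending on whether the exponent $2q$ of the pole at $a$ is less than or greater than $c+2=\delta-2\tau$, the inner integral is bounded by $(1-|z|^{2})^{-2\tau}|1-\overline{a}z|^{-2q}$, together with, in the latter case, the extra term $(1-|a|^{2})^{\delta-2\tau-2q}|1-\overline{a}z|^{-\delta}$. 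The first term gives $(1-|a|^{2})^{q}\int_{\mathbb{D}}|1-\overline{a}z|^{-2q}\,d\nu(z)$, bounded again by Lemma \ref{lem1}; the extra term, when present, gives $(1-|a|^{2})^{\delta-2\tau-q}\int_{\mathbb{D}}|1-\overline{a}z|^{-\delta}\,d\nu_{\tau}(z)$, which by the familiar bound $\int_{\mathbb{D}}|1-\overline{a}z|^{-\delta}\,d\mu(z)\lesssim(1-|a|^{2})^{s-\delta}$ for an $s$-Carleson measure $\mu$ with $\delta>s$ (here $s=q+2\tau$, so one needs $\delta>q+2\tau$) reduces to a constant. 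In both cases the bound is uniform in $a$.

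It remains to choose the parameters. Letting $\tau\downarrow0$, the effective constraints on $\delta$ are $\delta>1$, $\delta<2\sigma+A$, and either $\delta>2q$ (one-term case) or $q<\delta<2q$ (two-term case). Such a $\delta$ exists exactly when $\max\{1,q\}<2\sigma+A$: recalling $A=p+2\beta-2$ and $\tfrac12(q-A)=2-2\beta$, this is $\sigma>\tfrac12(3-p-2\beta)$ when $q\le1$ and $\sigma>2-2\beta$ when $q\ge1$, that is, exactly the hypothesis $\sigma>\max\{(3-p-2\beta)/2,\,2-2\beta\}$ (the one-term branch is used when $q\le\tfrac12$, where $2q\le1$, and the two-term branch otherwise). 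The main obstacle throughout is precisely this bookkeeping: keeping every Forelli--Rudin and two-point integral strictly away from its divergence and logarithmic thresholds, while arranging the powers of $(1-|a|^{2})$ and $(1-|z|^{2})$ to cancel so that only the two Carleson inequalities from Lemma \ref{lem1} survive, and checking that the two lower bounds on $\sigma$ in the statement jointly suffice.
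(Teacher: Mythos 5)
Your argument is correct, but it takes a genuinely different route from the paper. The paper works directly with Carleson boxes: it fixes an arc $I$, splits the defining integral of $T_{\sigma}\psi$ over $S(2I)$ and its complement, controls the local piece by showing an auxiliary integral operator with kernel $K(w,z)$ is bounded on $L^{2}(\mathbb{D})$ via Schur's test (this is where $\sigma>(3-p-2\beta)/2$ enters), and controls the tail by a dyadic annular decomposition $\mathbb{D}\setminus S(2I)=\bigcup_{n}\Delta_{n}$ with $|1-\overline{z}w|\gtrsim 2^{n}|I|$ plus H\"older and a geometric series (this is where $\sigma>2-2\beta$ enters). You instead pass through the M\"obius-invariant reformulation of Lemma \ref{lem1}, apply Cauchy--Schwarz to the kernel of $T_{\sigma}$, use a Forelli--Rudin estimate to collapse the geometric factor, and finish with Fubini and a two-point integral estimate; your parameter count for $\delta$ correctly reproduces both thresholds on $\sigma$, and your $\tau$-perturbation and the $(q+2\tau)$-Carleson property of $\nu_{\tau}$ are sound. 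Your approach avoids the box decomposition and the Schur test entirely, at the cost of needing the full three-case version of the two-point estimate: the paper's Lemma \ref{lema} only covers the regime $t<s+2<r$, whereas your two-term branch (which is genuinely needed when $q>1/2$, e.g.\ $p=1$, $\beta=3/4$, where the one-term branch alone would force $\sigma>p/2+3-3\beta$, a strictly stronger hypothesis) uses the case where both pole exponents exceed $s+2$ and an extra term $(1-|a|^{2})^{s+2-t}|1-\overline{a}z|^{-r}$ appears. That estimate is classical, but since it is not in the paper you should state it precisely with a reference (or a short proof), and likewise the bound $\int_{\mathbb{D}}|1-\overline{a}z|^{-\delta}\,d\mu(z)\lesssim(1-|a|^{2})^{s-\delta}$ for an $s$-Carleson measure with $\delta>s$, which you use for the extra term. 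With those two standard facts supplied, the proof is complete.
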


\begin{proof}
Given a subarc $I\subset T$ and any positive integer $n\leq
\log_{2}(1/|I|)$. We have

\begin{equation}
\begin{split}\nonumber
&\int_{S(I)}|T_{\sigma}\psi(w)|^{2}(1-|w|^{2})^{2\sigma+p+2\beta-4}dA(w)\\
\leq& \int_{S(I)}\left(\int_{\mathbb{D}}\frac{(1-|z|^{2})^{b-1}}{|1-\overline{z}w|^{b+\sigma}}|\psi(z)|
dA(z)\right)^{2}(1-|w|^{2})^{2\sigma+p+2\beta-4}dA(w)\\
=&\int_{S(I)}\left(\Big(\int_{S(2I)}+\int_{\mathbb{D}\backslash
S(2I)}\Big)\frac{(1-|z|^{2})^{b-1}}{|1-\overline{z}w|^{b+\sigma}}|\psi(z)|
dA(z)\right)^{2}(1-|w|^{2})^{2\sigma+p+2\beta-4}dA(w)\\
\leq&
2\int_{S(I)}\left(\int_{S(2I)}\frac{(1-|z|^{2})^{b-1}(1-|w|^{2})^{\sigma+\frac{p}{2}+\beta-2}}{|1-\overline{z}w|^{b+\sigma}}
|\psi(z)|dA(z)\right)^{2}dA(w)
\\
+&2\int_{S(I)}\left(\int_{\mathbb{D}\backslash S(2I)}\frac{(1-|z|^{2})^{b-1}(1-|w|^{2})^{\sigma+\frac{p}{2}+\beta-2}}
{|1-\overline{z}w|^{b+\sigma}}|\psi(z)|dA(z)\right)^{2}dA(w)\\
=:& M_{1}+M_{2}.
\end{split}
\end{equation}
Let
$$K(w,z)=\frac{(1-|z|^{2})^{b-\frac{p}{2}-\beta}(1-|w|^{2})^{\sigma+\frac{p}{2}+\beta-2}}{|1-\overline{z}w|^{b+\sigma}}.$$
Consider the operator
$$Bf(w)=\int_{\mathbb{D}}f(z)K(w,z)dA(z), \ f\in L^{2}(\mathbb{D}).$$
For $z\in \mathbb{D}$, using Lemma 3.10 of \cite{zhu}, we have
$$\int_{\mathbb{D}}K(w,z)(1-|w|^{2})^{-1/2}dA(w)\lesssim(1-|z|^{2})^{-1/2}$$
and
$$\int_{\mathbb{D}}K(w,z)(1-|z|^{2})^{-1/2}dA(z)\lesssim(1-|w|^{2})^{-1/2}.$$
By Schur's lemma (\cite[Corollary 3.7]{zhu}), $B$ is bounded on
$L^{2}(\mathbb{D})$.

Take
$$g(z)=(1-|z|^{2})^{\frac{p}{2}+\beta-1}|\psi(z)|\chi_{S(2I)}(z).$$
Since $|\psi(z)|^{2}(1-|z|^{2})^{p+2\beta-2}dA(z)$ is a
$(p+2-2\beta)$-Carleson measure, then $g\in L^{2}(\mathbb{D})$ and
$\|g\|_{L^{2}}^{2}\lesssim |I|^{p+2-2\beta}$. Thus,
\begin{equation}\nonumber
\begin{split}
M_{1}&\lesssim
\int_{\mathbb{D}}\Big|\int_{\mathbb{D}}K(w,z)g(z)dA(z)
\Big|^{2}dA(w)
\\&=\|B(g)\|_{L^{2}}^{2}\lesssim \|g\|_{L^{2}}^{2}\lesssim
|I|^{p+2-2\beta}.
\end{split}
\end{equation}

Write
$$D\backslash
S(2I)=\bigcup\limits_{n=1}^{\infty}S(2^{n+1}I)\backslash S(2^{n}I)=:
\bigcup\limits_{n=1}^{\infty} \Delta_{n}.$$ For $n\geq 0$, the
following inequality is well-known ( \cite[p.232]{GJB} ):
$$|1-\overline{z}w|\geq C 2^{n}|I|,$$
 where $\ w\in S(I)\  \mbox{and}\ z\in S(2^{n+1}I)\backslash S(2^{n}I)$. Note that
$$\int_{S(2^{n}I)}(1-|w|^{2})^{a}dA(w)\lesssim (2^{n}|I|)^{a+2},\ n\geq 0, a>-1.$$
Using H\"{o}lder's inequality, we have
\begin{equation}\nonumber
\begin{split}
\int_{S(2^{n+1}I)}|\psi(z)|(1-|z|^{2})^{b-1}dA(z)&\leq \left(
\int_{S(2^{n+1}I)}|\psi(z)|^{2}(1-|z|^{2})^{p+2\beta-2}dA(z)\right)^{1/2}\\
&\times\left(\int_{S(2^{n+1}I)} (1-|z|^{2})^{2b-p-2\beta}dA(z)\right)^{1/2}\\
&\leq \left(\int_{S(2^{n+1}I)}|\psi(z)|^{2}(1-|z|^{2})^{p+2\beta-2}dA(z)\right)^{1/2}\\
&\times(2^{n+1}|I|)^{b-\beta+1-p/2}.
\end{split}
\end{equation}

We get
\begin{equation}\nonumber
\begin{split}
M_{2}&=2\int_{S(I)}\left(\sum_{n=1}^{\infty}\int_{\Delta_{n}}\frac{(1-|z|^{2})^{b-1}}{|1-\overline{z}w|^{b+\sigma}}|\psi(z)|dA(z)
\right)^{2}(1-|w|^{2})^{2\sigma+p+2\beta-4}dA(w)\\
& \lesssim \int_{S(I)}\left(\sum_{n=1}^{\infty}\frac{1}{(2^{n}|I|)^{b+\sigma}}\int_{S(2^{n+1}I)}|\psi(z)|(1-|z|^{2})^{b-1}dA(z)\right)^{2}
(1-|w|^{2})^{2\sigma+p+2\beta-4}dA(w)\\
&\lesssim
|I|^{2\sigma+p+2\beta-2}\left(\sum_{n=1}^{\infty}\frac{1}{(2^{n}|I|)^{b+\sigma}}\int_{S(2^{n+1}I)}|\psi(z)
|(1-|z|^{2})^{b-1}dA(z)\right)^{2}\\
&\lesssim |I|^{p-2\beta+2}\left(\sum_{n=1}^{\infty}\frac{1}{2^{n(\sigma+2\beta-2)}}\left(\frac{1}{(2^{n+1}|I|)^{p-2\beta+2}
}\int_{S(2^{n+1}I)}|\psi(z)|^{2}(1-|z|^{2})^{p+2\beta-2}dA(z)\right)^{1/2} \right)^{2}\\
&\lesssim |I|^{p-2\beta+2}\Big(\sum_{n=1}^{\infty}\frac{1}{2^{n(\sigma+2\beta-2)}}\Big)^{2}
\\&\lesssim
|I|^{p-2\beta+2},
\end{split}
\end{equation}
where we used the fact that
$|\psi(z)|^{2}(1-|z|^{2})^{p-2+2\beta}dA(z)$ is a
$(p+2-2\beta)$-Carleson measure and $\sigma>2-2\beta$. This
completes the proof.
\end{proof}

\begin{remark} If $\beta=1$, the above lemma is Theorem 3 of
\cite{WX}.
\end{remark}

Let $\Gamma$ be the gamma function and $[\nu]$ the smallest integer
bigger than or equal to $\nu$($\nu>0$). For fixed $b>1$, consider
the fractional derivative, called $\nu$-derivative of $f$,
$$f^{(\nu)}(z)=\frac{\Gamma(b+\nu)}{\Gamma(b)}\int_{\mathbb{D}}\frac{\overline{w}^{[\nu-1]}f'(w)}{(1-\overline{w}z)^{b+\nu}}(1-|w|^{2})^{b-1}dA(w).$$
If $\nu$ is a positive integer, then $\nu$-derivative is just the
usual $\nu$-th order derivative. But $f^{(\nu)}$ depends on $b$ if
$\nu$ is not an integer (see, for example, \cite{WX}).

\begin{theorem}\label{th2}
Suppose that $1/2<\beta\leq 1$, $2-2\beta<p\leq 1$ and
$\nu>\max\{(3-p-2\beta)/2, 2-2\beta\}$. Let $f\in H(\mathbb{D})$.
Then $f$ belongs to $Q_{p}^{\beta}(\mathbb{D})$ if and only if
$|f^{(\nu)}(z)|^{2}(1-|z|^{2})^{2\nu+p+2\beta-4}dA(z)$ is a
$(p+2-2\beta)$-Carleson measure.
\end{theorem}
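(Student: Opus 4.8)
The plan is to reduce Theorem \ref{th2} to the case $\nu=1$, which is exactly the definition of $Q_{p}^{\beta}(\mathbb{D})$, by passing back and forth through the integral operator $T_{\sigma}$ of Lemma \ref{lem2}. Recall that for $f\in H(\mathbb{D})$ the reproducing-type formula
$$f'(z)=c_{b}\int_{\mathbb{D}}\frac{f'(w)}{(1-\overline{w}z)^{b+1}}(1-|w|^{2})^{b-1}dA(w)$$
holds for every $b>1$, and more generally the $\nu$-derivative $f^{(\nu)}$ is obtained from $f'$ by the same kind of kernel with exponent $b+\nu$. Conversely, one can recover $f'$ (up to an admissible derivative of lower order, which does not affect Carleson-measure behaviour) from $f^{(\nu)}$ by an analogous integral with the roles reversed. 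So the statement is equivalent to: $|f'(z)|^{2}(1-|z|^{2})^{p-2+2\beta}dA(z)$ is a $(p+2-2\beta)$-Carleson measure if and only if $|f^{(\nu)}(z)|^{2}(1-|z|^{2})^{2\nu+p+2\beta-4}dA(z)$ is.

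For the forward direction, suppose $f\in Q_{p}^{\beta}(\mathbb{D})$, i.e. $d\mu(z)=|f'(z)|^{2}(1-|z|^{2})^{p-2+2\beta}dA(z)$ is a $(p+2-2\beta)$-Carleson measure. Apply $T_{\sigma}$ with $\sigma=\nu$ to $\psi=f'$: since $\nu>\max\{(3-p-2\beta)/2,2-2\beta\}$, Lemma \ref{lem2} applies and gives that $|T_{\nu}f'(z)|^{2}(1-|z|^{2})^{2\nu+p+2\beta-4}dA(z)$ is a $(p+2-2\beta)$-Carleson measure. Now I would identify $T_{\nu}f'$ with a constant multiple of $f^{(\nu)}$ (up to the harmless $\overline{w}^{[\nu-1]}$ factor, which has modulus $\le 1$ and can be absorbed — or handled by noting $|\overline{w}^{[\nu-1]}|=|w|^{[\nu-1]}\le 1$ inside the kernel estimate of Lemma \ref{lem2}, which only used $|\psi|$). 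This yields the desired Carleson condition on $f^{(\nu)}$.

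For the converse, assume $|f^{(\nu)}(z)|^{2}(1-|z|^{2})^{2\nu+p+2\beta-4}dA(z)$ is a $(p+2-2\beta)$-Carleson measure and run the same machine in reverse: write $f'$ as an integral of $f^{(\nu)}$ against a kernel $(1-|w|^{2})^{c-1}|1-\overline{w}z|^{-(c+\tau)}$ with suitably chosen $c$ and $\tau$, so that $f'=c'\,T_{\tau}(h)$ where $h(w)=(1-|w|^{2})^{\nu}f^{(\nu)}(w)$ times a bounded factor; then verify that $|h(z)|^{2}(1-|z|^{2})^{p-2+2\beta}dA(z)$ is a $(p+2-2\beta)$-Carleson measure (this is just a rescaling of the hypothesis) and apply Lemma \ref{lem2} once more to conclude that $|T_{\tau}h(z)|^{2}(1-|z|^{2})^{p-2+2\beta}dA(z)=|f'(z)|^{2}(1-|z|^{2})^{p-2+2\beta}dA(z)$ (with the exponent arithmetic $2\tau+(\text{something})-4=p-2+2\beta$ forcing the choice of $\tau$) is a $(p+2-2\beta)$-Carleson measure, i.e. $f\in Q_{p}^{\beta}(\mathbb{D})$.

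The main obstacle, and the part requiring genuine care rather than routine bookkeeping, is making the two integral representations precise and checking that the exponents line up with the hypotheses $2-2\beta<p\le 1$, $1/2<\beta\le 1$, and $\nu>\max\{(3-p-2\beta)/2,2-2\beta\}$ so that Lemma \ref{lem2} is applicable in both directions (in particular the constraint $\sigma>(3-p-2\beta)/2$ is exactly what makes the Schur-test kernel $K(w,z)$ in Lemma \ref{lem2} integrable with the weight $(1-|w|^{2})^{-1/2}$). One must also confirm that replacing $f'$ by $f^{(\nu)}$ and back introduces only lower-order derivative terms which themselves generate Carleson measures of the right order, so that they can be discarded; this is standard for the unit disc (cf. \cite{WX}, \cite{zhu}) but should be stated. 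Everything else — the identification of $T_{\sigma}$-images with fractional derivatives, the $|w|^{[\nu-1]}\le 1$ absorption, the rescaling of Carleson conditions by powers of $(1-|z|^{2})$ — is bookkeeping once the exponents are fixed.
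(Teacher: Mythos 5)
Your forward direction is essentially the paper's argument: bound $|f^{(\nu)}(z)|\lesssim T_{\nu}|f'|(z)$ using $|\overline{w}^{[\nu-1]}|\le 1$, and apply Lemma \ref{lem2} with $\sigma=\nu$ and $\psi=|f'|$. That part is fine, including your observation about why $\sigma>(3-p-2\beta)/2$ is the right threshold for the Schur test.

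The converse contains a genuine gap. You propose to recover $f'$ itself as $c'\,T_{\tau}(h)$ with $h(w)=(1-|w|^{2})^{\nu}f^{(\nu)}(w)$ (up to a bounded factor), with $\tau$ forced to equal $1$ by the exponent arithmetic $2\tau+p+2\beta-4=p-2+2\beta$. No such representation exists once $\nu>1$. By the definition of the $\nu$-derivative, if $f(z)=\sum_{j}a_{j}z^{j}$ then $f^{(\nu)}(z)=\sum_{j}a_{j}^{(\nu)}z^{j}$ with $a_{j}^{(\nu)}$ a constant multiple of $a_{j+m+1}$, where $m=[\nu-1]$; that is, $f^{(\nu)}$ carries a coefficient index shift of $m+1$, not of $1$. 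Any integral of $f^{(\nu)}$ against a kernel of the form $(1-|w|^{2})^{c-1}(1-\overline{w}z)^{-(c+\tau)}$ produces, by the orthogonality of the monomials, a power series whose $k$-th coefficient is a multiple of $a_{k+m+1}$ --- an object of the type of the $(m+1)$-st ordinary derivative $f^{(m+1)}$, never $f'$. (For any $\nu\in(1,2]$ one already has $m+1=2$, and this is the range used in Theorem \ref{th3}.) The paper therefore takes $\psi(z)=|f^{(\nu)}(z)|(1-|z|^{2})^{\nu-1}$ --- note the exponent $\nu-1$, not $\nu$, which is what makes $|\psi(z)|^{2}(1-|z|^{2})^{p-2+2\beta}dA(z)$ exactly the hypothesis rather than a weakened version of it --- proves $|f^{(m+1)}(z)|\lesssim T_{m+1}\psi(z)$ via a Parseval computation (after first checking $\int_{\mathbb{D}}|f^{(\nu)}(z)|(1-|z|^{2})^{\nu}dA(z)<\infty$ to justify the term-by-term integration), applies Lemma \ref{lem2} with $\sigma=m+1$ to conclude that $|f^{(m+1)}(z)|^{2}(1-|z|^{2})^{2m+p+2\beta-2}dA(z)$ is a $(p+2-2\beta)$-Carleson measure, and only then invokes the standard equivalence of such Carleson conditions across integer derivative orders (Theorem 4.33 of \cite{RJ}) to return to the first derivative and conclude $f\in Q_{p}^{\beta}(\mathbb{D})$. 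Your closing remark about discarding ``lower-order derivative terms'' does not substitute for this: the difficulty is not extra lower-order terms but that the inversion necessarily lands on the wrong derivative order, and the passage from the $(m+1)$-st derivative condition back to the first-derivative condition is a separate ingredient that must be cited or proved.
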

\begin{proof}
At first, if $f\in Q_{p}^{\beta}(\mathbb{D})$, then
$|f'(z)|^{2}(1-|z|^{2})^{p-2+2\beta}$ is a $(p+2-2\beta)$-Carleson
measure. Note that $\nu>\max\{(3-p-2\beta)/2, 2-2\beta\}$ and
$$|f^{(\nu)}(z)|\leq \frac{\Gamma(b+\nu)}{\Gamma(b)}\int_{\mathbb{D}}\frac{(1-|w|^{2})^{b-1}}{|1-\overline{w}z|^{b+\nu}}|f'(w)|dA(w)
=\frac{\Gamma(b+\nu)}{\Gamma(b)}T_{\nu}|f'(z)|.$$ The desired result
follows from Lemma $\ref{lem2}$.

Conversely, if
$|f^{(\nu)}(z)|^{2}(1-|z|^{2})^{2\nu+p+2\beta-4}dA(z)$ is a
$(p+2-2\beta)$-Carleson measure, then
$$\int_{\mathbb{D}}|f^{(\nu)}(z)|^{2}(1-|z|^{2})^{2\nu+p+2\beta-4}dA(z)<\infty.$$
 Combining this with H\"{o}lder's inequality yields
\begin{equation}\nonumber
\begin{split}
\int_{\mathbb{D}}|f^{(\nu)}(z)|(1-|z|^{2})^{\nu}dA(z)&\leq\left(
\int_{\mathbb{D}}|f^{(\nu)}(z)|^{2}(1-|z|^{2})^{2\nu}dA(z)
\right)^{1/2}
\\&\leq\left(
\int_{\mathbb{D}}|f^{(\nu)}(z)|^{2}(1-|z|^{2})^{2\nu+p+2\beta-4}dA(z)
\right)^{1/2}\\&<\infty.
\end{split}
\end{equation}
For $m=[\nu-1]$, $b>1$ and $z\in \mathbb{D}$, we know
$$\int_{\mathbb{D}}\frac{(1-|w|^{2})^{b-2}}{|1-\overline{w}z|^{b+m+1}}dA(w)<\infty.$$
Let $\psi(z)=|f^{(\nu)}(z)|(1-|z|^{2})^{\nu-1}$. Then
$|\psi(z)|^{2}(1-|z|^{2})^{p-2+2\beta}dA(z)$ is a
$(p+2-2\beta)$-Carleson measure. We next prove that
$$|f^{m+1}(z)|\lesssim |T_{m+1}\psi(z)|.$$
In fact, for $\lambda>0$,
$$\frac{1}{(1-\overline{w}z)^{\lambda}}=\sum_{j=0}^{\infty}\frac{\Gamma(j+\lambda)}{\Gamma(j+1)\Gamma(\lambda)}(\overline{w}z)^{j}.$$
It is easy to check that
\begin{equation}\nonumber
\begin{split}
(z^{k})^{(\nu)}=
\begin{cases}0
& \mbox{if}\  k<[\nu-1]+1;\\
\frac{\Gamma(k+1)\Gamma(b+k+\nu-1-[\nu-1])}{\Gamma(b+k)\Gamma(k-[\nu-1])}z^{k-1-[\nu-1]} & \mbox{if}\ k\geq
[\nu-1]+1.
\end{cases}
\end{split}
\end{equation}
Let $f(z)=\sum\limits_{j=0}^{\infty}a_{j}z^{j}$. A direct
computation shows that
$f^{(\nu)}(z)=\sum\limits_{j=0}^{\infty}a_{j}^{(\nu)}z^{j}$, where
$$a_{j}^{(\nu)}=a_{j+m+1}\frac{\Gamma(j+b+\nu)\Gamma(j+m+2)}{\Gamma(j+1)\Gamma(j+m+b+1)}.$$

Now applying Parseval's theorem (\cite{RW}), we obtain
\begin{equation}\nonumber
\begin{split}
&\frac{\Gamma(b+m+1)}{\Gamma(b+\nu-1)}\int_{\mathbb{D}}\frac{(1-|w|^{2})^{b-2}}{(1-\overline{w}z)^{b+m+1}}
f^{(\nu)}(w)(1-|w|^{2})^{\nu}dA(w)\\
=&\frac{\Gamma(b+m+1)}{\Gamma(b+\nu-1)}\int_{\mathbb{D}}(1-|w|^{2})^{b+\nu-2}\Big(\sum_{k=0}^{\infty}
\frac{\Gamma(k+b+m+1)}{k!\Gamma(b+m+1)}\overline{w}^{k}z^{k}\Big)\Big(\sum_{j=0}^{\infty}a_{j}^{(\nu)}w^{j}\Big)dA(w)\\
=&\frac{\Gamma(b+m+1)}{\Gamma(b+\nu-1)}\sum_{k=0}^{\infty}a_{k}^{(\nu)}z^{k}\frac{\Gamma(k+b+m+1)}{k!\Gamma(b+m+1)}
\int_{\mathbb{D}}(1-|w|^{2})^{b+\nu-2}|w|^{2k}dA(w)\\
=&\sum_{k=0}^{\infty}\frac{\Gamma(k+m+2)}{\Gamma(k+1)}a_{k+m+1}z^{k}\\
=&f^{(m+1)}(z).
\end{split}
\end{equation}

Hence $|f^{(m+1)}|\lesssim |T_{m+1}\psi(z)|$. Using Lemma
$\ref{lem2}$ again yields that
$|f^{(m+1)}|^{2}(1-|z|^{2})^{2m+p+2\beta-2}dA(z)$ is a
$(p+2-2\beta)$-Carleson measure. So the desired result follows from
Theorem 4.33 of \cite{RJ}.
\end{proof}

Based on Theorem $\ref{th2}$, we obtain the main result of this
section.
\begin{theorem}\label{th3}
Let $0<p\leq 1$, $1/2<\beta\leq 1$ with $2\beta-p\geq 1$. Then $f\in
Q_{p}^{\beta}(\mathbb{D})$ if and only if
$f^{(\frac{3-p-2\beta}{2})}\in
\mathcal{L}^{2,p-2\beta+2}(\mathbb{D})$.
\end{theorem}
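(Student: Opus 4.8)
The plan is to deduce Theorem \ref{th3} by combining Theorem \ref{th2} with the Carleson-measure characterization of Morrey spaces in Lemma \ref{lem3}. The key observation is that the exponent $\nu=\frac{3-p-2\beta}{2}$ is precisely the borderline value at which the two conditions match up. First I would verify that this choice of $\nu$ is admissible in Theorem \ref{th2}: we need $\nu>\max\{(3-p-2\beta)/2,\,2-2\beta\}$. The first inequality fails with equality, so strictly speaking one must check that Theorem \ref{th2} (or rather the underlying Lemma \ref{lem2}, where the hypothesis is $\sigma>\max\{(3-p-2\beta)/2,2-2\beta\}$) still applies at the endpoint; under the standing hypothesis $2\beta-p\geq 1$ one has $\frac{3-p-2\beta}{2}\leq \frac12 < 2-2\beta$ is false in general, so I would instead note $2\beta - p \ge 1 \Rightarrow \frac{3-p-2\beta}{2} \le \frac{3 - 1 - 2\cdot\frac12\cdot? }{}$— more carefully, $2\beta-p\ge1$ forces $\frac{3-p-2\beta}{2}=\frac{(2-p-2\beta)+1}{2}\le \frac{1+ (1-2\beta)+ (1-p)}{2}$, and since $\beta\le1$, $p\le1$ this is $\le \frac12$, while $2-2\beta<1$; so the relevant constraint is really the convergence of the series in the proof of Lemma \ref{lem2}, which needs $\sigma>2-2\beta$. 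I would remark that the endpoint case $2\beta-p=1$ can be handled either by a limiting argument or by checking the geometric series $\sum 2^{-n(\sigma+2\beta-2)}$ converges since $\sigma+2\beta-2 = \frac{3-p-2\beta}{2}+2\beta-2 = \frac{2\beta-p-1}{2}\ge 0$ — actually this is $\ge0$ not $>0$, so a slightly more careful estimate (e.g. using the strict inequality $2\beta-p>1$ on a dense subset, or replacing the crude bound by one that gains an extra decaying factor) is needed at the true endpoint.

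Granting the applicability of Theorem \ref{th2} with $\nu=\frac{3-p-2\beta}{2}$, the main computation is just arithmetic of exponents. By Theorem \ref{th2}, $f\in Q_p^\beta(\mathbb{D})$ if and only if $|f^{(\nu)}(z)|^2(1-|z|^2)^{2\nu+p+2\beta-4}\,dA(z)$ is a $(p+2-2\beta)$-Carleson measure. Substituting $\nu=\frac{3-p-2\beta}{2}$ gives $2\nu+p+2\beta-4 = (3-p-2\beta)+p+2\beta-4 = -1$, so the weight is $(1-|z|^2)^{-1}$. This does not quite match the weight $(1-|z|^2)$ appearing in Lemma \ref{lem3}; the resolution is that $f^{(\nu)}$ should be regarded as the ordinary derivative of a fractional primitive. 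Precisely, I would introduce $g=f^{(\nu-1)}$ (the $(\nu-1)$-derivative), so that $g'=f^{(\nu)}$ up to the harmless constant $\Gamma(b+\nu)/\Gamma(b+\nu-1)$ and normalization, and then $|g'(z)|^2(1-|z|^2)^{-1}\,dA(z)$ being a $(p+2-2\beta)$-Carleson measure is \emph{not} the right form either. Let me reconsider: the cleaner route is to recognize that $f^{(\nu)}$ with $2\nu+p+2\beta-4=-1$ means we want $|h'(z)|^2(1-|z|^2)\,dA(z)$ to be $(p-2\beta+2)$-Carleson, where $h$ is chosen so that $h'$ plays the role of $f^{(\nu)}$ with one extra unit of weight absorbed; this is exactly the statement that $h=f^{(\nu)}$ viewed as $(f^{(\nu-1)})'$... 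I would instead simply invoke Lemma \ref{lem3} with $\lambda = p-2\beta+2$ applied to the analytic function $F:=f^{(\nu-1/2)}$ — no; the correct and honest statement is: Lemma \ref{lem3} says $F\in\mathcal{L}^{2,\lambda}(\mathbb{D})\iff |F'(z)|^2(1-|z|^2)\,dA(z)$ is $\lambda$-Carleson. So I need $|f^{(\nu)}(z)|^2(1-|z|^2)^{-1} = |F'(z)|^2(1-|z|^2)$, i.e. $F'=f^{(\nu)}(1-|z|^2)^{-1}$, which is not analytic. The genuine fix: apply Theorem \ref{th2} with $\nu$ replaced by $\nu+1 = \frac{5-p-2\beta}{2}$? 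Then $2(\nu+1)+p+2\beta-4 = +1$, giving weight $(1-|z|^2)^{1}$, and we identify $f^{(\nu+1)}$ with $(f^{(\nu)})'$... but the theorem is stated for the $\nu$-derivative, and $(f^{(\nu)})'$ is not literally $f^{(\nu+1)}$. I would therefore use the composition/semigroup property of fractional derivatives: $(f^{(\nu)})'$ and $f^{(\nu+1)}$ differ by lower-order terms / comparable quantities (this is standard, cf. \cite{zhu}, \cite{RJ}), so their associated Carleson-measure conditions are equivalent.

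Putting it together: set $\nu_0=\frac{3-p-2\beta}{2}$ and write $G=f^{(\nu_0)}$. By Theorem \ref{th2} applied with exponent $\nu_0+1$ (admissible since $\nu_0+1 > \nu_0 \ge \max\{\nu_0, 2-2\beta\}$ once $2\beta-p\ge1$ is used to control $2-2\beta$), together with the fact that $G' = f^{(\nu_0+1)}$ up to comparable quantities, $f\in Q_p^\beta(\mathbb{D})$ iff $|G'(z)|^2(1-|z|^2)^{2(\nu_0+1)+p+2\beta-4}\,dA(z)=|G'(z)|^2(1-|z|^2)\,dA(z)$ is a $(p+2-2\beta)$-Carleson measure. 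By Lemma \ref{lem3} with $\lambda=p-2\beta+2$ (note $0<p-2\beta+2\le1$ exactly when $2\beta-p\ge1$, which is our hypothesis — this is where that hypothesis is essential, to keep $\lambda$ in the legal Morrey range), this is equivalent to $G=f^{(\nu_0)}\in\mathcal{L}^{2,p-2\beta+2}(\mathbb{D})$, which is the claim. The main obstacle I anticipate is the bookkeeping at the two endpoints: checking that Theorem \ref{th2}/Lemma \ref{lem2} really can be pushed to the boundary exponent (or arguing around it), and making precise the identification of $(f^{(\nu_0)})'$ with $f^{(\nu_0+1)}$ modulo comparable terms; the exponent arithmetic itself and the appeal to Lemmas \ref{lem3} and Theorem \ref{th2} are routine.
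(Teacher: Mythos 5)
Your final argument is exactly the paper's: apply Theorem \ref{th2} at the shifted exponent $\nu=\frac{5-p-2\beta}{2}$ (which is admissible and makes the weight $(1-|z|^{2})^{1}$), use the identity $(f^{(\nu)})'=f^{(\nu+1)}$ to recognize $f^{(\frac{5-p-2\beta}{2})}$ as the derivative of $f^{(\frac{3-p-2\beta}{2})}$, and conclude via Lemma \ref{lem3} with $\lambda=p-2\beta+2$. The only cosmetic difference is your hedge that $(f^{(\nu_0)})'$ equals $f^{(\nu_0+1)}$ ``up to comparable quantities''; this is in fact an exact identity (differentiating the integral formula raises both the power of $(1-\overline{w}z)$ and the Gamma factor consistently), as the paper notes, so no limiting or endpoint argument for Theorem \ref{th2} at $\nu_0$ is needed.
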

\begin{proof}
For $\nu>0$, a simple calculation shows that
$(f^{(\nu)})'=f^{(\nu+1)}$. If $f\in Q_{p}^{\beta}(\mathbb{D})$, by
Theorem $\ref{th2}$,
$$\sup_{I\subset \mathbb{T}}\frac{1}{|I|^{p+2-2\beta}}\int_{S(I)}|f^{(\frac{5-p-2\beta}{2})}(z)|^{2}(1-|z|^{2})dA(z)<\infty.$$
From Lemma $\ref{lem3}$, we get $f^{(\frac{3-p-2\beta}{2})}\in
\mathcal{L}^{2,p-2\beta+2}(\mathbb{D})$.

On the other hand, if $f^{(\frac{3-p-2\beta}{2})}\in
\mathcal{L}^{2,p-2\beta+2}$. Using Lemma $\ref{lem3}$ again, we know
that $|f^{(\frac{5-p-2\beta}{2})}(z)|^{2}(1-|z|^{2})dA(z)$ is a
$(p+2-2\beta)$-Carleson measure. Note that
$\frac{5-p-2\beta}{2}>\max\{\frac{3-p-2\beta}{2}, 2-2\beta\}$,
Theorem $\ref{th2}$ implies $f\in Q_{p}^{\beta}(\mathbb{D})$. We
finishe the proof.
\end{proof}

\begin{remark}
If $\beta=1$, Theorem $\ref{th3}$ yields that $f\in
Q_{p}(\mathbb{D})$ if and only if $ f^{(\frac{1-p}{2})}\in
\mathcal{L}^{2, p}(\mathbb{D})$. This result on the upper half plane
was obtained in \cite{WX}. The relationship between $Q_{K}$ and
Morrey type spaces is established in \cite{Wulan}. Moreover, Theorem
$\ref{th3}$ can be compared with Theorem 3.2 of \cite{Wulan}.
\end{remark}

\section{\bf Multiplier spaces of $Q_{p}^{\beta}(\mathbb{D})$}
In this section, we consider the multiplier spaces of
$Q^{\beta}_{p}(\mathbb{D})$.
\begin{definition}
Let $X$ be a Banach space of analytic functions on $\mathbb{D}$.
\begin{itemize}
\item[(1)] An
analytic function $g\in H(\mathbb{D})$ is called a pointwise
multiplier on $X$ if $gX\subset X$.
\item[(2)] The multiplier space of $X$, denoted by $\mathcal{M}(X)$, is the set of all pointwise
multipliers on $X$.\end{itemize}
\end{definition}
In the study of multiplier spaces, one useful tool is  the following
multiplication operator induced by $g$: $$ M_{g}f=gf, \text{ where
}f\in X.$$ In fact,  by the Closed Graph Theorem, it can be proved
that $M_{g}$ is a bounded linear operator on $X$ if and only if
$g\in \mathcal{M}(X)$. We refer the reader to \cite{BS} for more
information about pointwise multipliers.

To study $M_{g}$, we need the following integral operators. For any
$g\in H(\mathbb{D})$,
\begin{itemize}
\item[(1)] Volterra type operator $T_{g}$ induced by $g$ is defined as:
 $$T_{g}f(z)=\int_{0}^{z}f(w)g'(w)dw, \ \ f\in H(\mathbb{D}).$$

\item[(2)] The operator
$I_{g}$ related to $T_{g}$ is defined as:
$$I_{g}f(z)=\int_{0}^{z}f'(w)g(w)dw,\ \ f\in H(\mathbb{D});$$
\end{itemize}
 \begin{remark}
Generally, $g$ is assumed to be non-constant. In fact, if $g$ is a
constant $C$, then $I_{g}=C(f(z)-f(0))$ and $T_{g}=0$.
\end{remark}

It is easy to see that the multiplication operator $M_{g}$
can be decomposed as
$$M_{g}f(z)=f(0)g(0)+I_{g}f(z)+T_{g}f(z).$$
Hence the boundedness of $M_{g}$ can be deduced from that of $I_{g}$
and $T_{g}$. We next study the boundedness of the operators $I_{g}$
and $T_{g}$  on $Q_{p}^{\beta}(\mathbb{D})$. We need the following
lemma (\cite[Lemma 1]{ZR}).
\begin{lemma}\label{lema}
Suppose that $s>-1$, $r, t>0$ and $t<s+2<r$. We have
$$\int_{\mathbb{D}}\frac{(1-|z|^{2})^{s}}{|1-\overline{b}z|^{r}|1-\overline{a}z|^{t}}dA(z)\lesssim
\frac{1}{(1-|b|^{2})^{r-s-2}|1-\overline{a}b|^{t}},$$ where $a, b
\in \mathbb{D}$.
\end{lemma}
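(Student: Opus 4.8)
The plan is to reduce the two–pole integral to the classical one–pole (Forelli--Rudin) estimate
$$\int_{\mathbb{D}}\frac{(1-|z|^{2})^{s}}{|1-\overline{b}z|^{r}}\,dA(z)\lesssim (1-|b|^{2})^{s+2-r}\qquad(r>s+2>0),$$
which is contained in Lemma 3.10 of \cite{zhu}, by cutting $\mathbb{D}$ at the $a$–pole. The elementary geometric facts I will use are $|z-b|\le|1-\overline{b}z|$ and $|a-b|\le|1-\overline{a}b|$ (both from the identity $|1-\overline{u}v|^{2}-|u-v|^{2}=(1-|u|^{2})(1-|v|^{2})\ge0$), together with $1-|z|\le|1-\overline{a}z|$ and $\tfrac12(1-|b|^{2})\le 1-|b|\le|1-\overline{a}b|$.

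First I would split $\mathbb{D}=F_{1}\cup F_{2}$ with $F_{1}=\{|1-\overline{a}z|\ge\tfrac12|1-\overline{a}b|\}$ and $F_{2}=\{|1-\overline{a}z|<\tfrac12|1-\overline{a}b|\}$. On $F_{1}$ one pulls out $|1-\overline{a}z|^{-t}\le 2^{t}|1-\overline{a}b|^{-t}$ and applies the one–pole estimate above (valid since $r>s+2$), which already gives the desired bound $|1-\overline{a}b|^{-t}(1-|b|^{2})^{s+2-r}$ on this piece. The point of the decomposition is that on the remaining region $F_{2}$ the $b$–factor is essentially constant: $|1-\overline{b}z|\ge|z-b|\ge|1-\overline{a}b|-|1-\overline{a}z|>\tfrac12|1-\overline{a}b|$ and $|1-\overline{b}z|\le|1-\overline{a}z|+|a-b|<\tfrac32|1-\overline{a}b|$, so $|1-\overline{b}z|\approx|1-\overline{a}b|$ on $F_{2}$. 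Hence
$$\int_{F_{2}}\frac{(1-|z|^{2})^{s}}{|1-\overline{b}z|^{r}|1-\overline{a}z|^{t}}\,dA(z)\lesssim|1-\overline{a}b|^{-r}\int_{\{|1-\overline{a}z|<|1-\overline{a}b|\}}\frac{(1-|z|^{2})^{s}}{|1-\overline{a}z|^{t}}\,dA(z).$$

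What is then needed is the truncated estimate
$$\int_{\{z\in\mathbb{D}:\,|1-\overline{a}z|<R\}}\frac{(1-|z|^{2})^{s}}{|1-\overline{a}z|^{t}}\,dA(z)\lesssim R^{\,s+2-t}\qquad(s>-1,\ 0<t<s+2),$$
which I would get by decomposing $\{|1-\overline{a}z|<R\}$ into the dyadic shells $\{2^{-k-1}R\le|1-\overline{a}z|<2^{-k}R\}$: on the $k$-th shell $|1-\overline{a}z|\approx 2^{-k}R$, and a routine polar–coordinate computation (using $1-|z|\le|1-\overline{a}z|$ and $s>-1$) gives $\int_{\{|1-\overline{a}z|<2^{-k}R\}}(1-|z|^{2})^{s}\,dA(z)\lesssim(2^{-k}R)^{s+2}$, so the $k$-th term is $\lesssim(2^{-k}R)^{s+2-t}$ and the series sums because $s+2-t>0$. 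Substituting $R\approx|1-\overline{a}b|$ yields $\int_{F_{2}}\lesssim|1-\overline{a}b|^{\,s+2-r-t}=|1-\overline{a}b|^{-t}|1-\overline{a}b|^{\,s+2-r}$, and since $s+2-r<0$ and $|1-\overline{a}b|\ge\tfrac12(1-|b|^{2})$ this is $\lesssim|1-\overline{a}b|^{-t}(1-|b|^{2})^{s+2-r}$. Adding the $F_{1}$ and $F_{2}$ estimates completes the proof. The main obstacle is spotting the right cut: the naive approach of absorbing fractional powers of $|1-\overline{a}z|$ and $|1-\overline{b}z|$ into $|1-\overline{a}b|$ and $1-|b|^{2}$ always collides with the borderline exponent and loses a logarithm, whereas cutting asymmetrically about the $a$–pole makes $|1-\overline{b}z|$ comparable to $|1-\overline{a}b|$ on the bad set and reduces everything to the two standard one–pole estimates.
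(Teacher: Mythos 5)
Your proof is correct. Note that the paper does not actually prove this lemma: it is imported wholesale from \cite[Lemma~1]{ZR}, so there is no in-paper argument to compare against, and your write-up supplies a self-contained proof along the standard route for two-kernel Forelli--Rudin estimates. The individual steps all check out: the cut at the $a$-pole; the bound on $F_{1}$ by pulling out $|1-\overline{a}z|^{-t}$ and applying the one-pole estimate (which uses $s>-1$ and $r>s+2$); the comparability $|1-\overline{b}z|\approx|1-\overline{a}b|$ on $F_{2}$ via $|z-b|\le|1-\overline{b}z|$, $|a-b|\le|1-\overline{a}b|$ and the triangle inequality $|1-\overline{a}b|\le|1-\overline{a}z|+|a||z-b|$; the dyadic summation, which converges precisely because $t<s+2$; and the final absorption $|1-\overline{a}b|^{s+2-r}\lesssim(1-|b|^{2})^{s+2-r}$, valid because $s+2-r<0$ and $|1-\overline{a}b|\ge 1-|b|\ge\tfrac12(1-|b|^{2})$. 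The one step you should spell out is the measure bound $\int_{\{|1-\overline{a}z|<\rho\}}(1-|z|^{2})^{s}\,dA(z)\lesssim\rho^{s+2}$: the inequality $1-|z|\le|1-\overline{a}z|$ only localizes radially and by itself would give $\rho^{s+1}$; you also need the angular localization $|\arg(\overline{a}z)|\lesssim|1-\overline{a}z|$, i.e.\ that $\{|1-\overline{a}z|<\rho\}\cap\mathbb{D}$ sits inside a Carleson box of side comparable to $\rho$ (after disposing of the trivial case $|a|\le 1/2$, where the set is empty unless $\rho\gtrsim1$ and the full integral is already $\lesssim1\lesssim\rho^{s+2}$ since $s+2>t>0$). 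With that supplied, the polar-coordinate computation yields $\rho\cdot\rho^{s+1}$ as claimed and the argument is complete.
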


\begin{lemma}\label{lemb}
Let $0<p\leq 1$ and $1/2<\beta\leq 1$. For $b\in \mathbb{D}$, the function
$$f_{b}(z)=(1-|b|^{2})^{2-2\beta}(\sigma_{b}(z)-b)$$ belongs to
$Q_{p}^{\beta}(\mathbb{D})$. Moreover, $\|f_{b}\|_{Q_{p}^{\beta}(\mathbb{D})}\lesssim 1.$
\end{lemma}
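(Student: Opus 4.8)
The plan is to verify directly the defining condition of $Q_{p}^{\beta}(\mathbb{D})$, namely that
$$\sup_{I\subset \mathbb{T}}\frac{1}{|I|^{p+2-2\beta}}\int_{S(I)}|f_{b}'(z)|^{2}(1-|z|^{2})^{p-2+2\beta}dA(z)<\infty,$$
with a bound uniform in $b\in\mathbb{D}$. First I would compute $f_{b}'(z)$ explicitly: since $\sigma_{b}(z)=\frac{b-z}{1-\overline{b}z}$, one has $\sigma_{b}'(z)=-\frac{1-|b|^{2}}{(1-\overline{b}z)^{2}}$, so that
$$|f_{b}'(z)|^{2}=(1-|b|^{2})^{4-4\beta}\cdot\frac{(1-|b|^{2})^{2}}{|1-\overline{b}z|^{4}}=\frac{(1-|b|^{2})^{6-4\beta}}{|1-\overline{b}z|^{4}}.$$
Rather than estimating over an arbitrary Carleson box $S(I)$, I would instead use the equivalent M\"obius-invariant description of the norm recorded in the Remark after Theorem~\ref{th1}, namely that
$$\|f\|_{Q_{p}^{\beta}(\mathbb{D})}^{2}\approx |f(0)|^{2}+\sup_{a\in\mathbb{D}}\int_{\mathbb{D}}|f'(z)|^{2}(1-|z|^{2})^{4\beta-4}(1-|\sigma_{a}(z)|^{2})^{p+2-2\beta}dA(z),$$
which is valid by Lemma~\ref{lem1}. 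The value $f_{b}(0)=(1-|b|^{2})^{2-2\beta}(b-b)=0$, so the point-evaluation term vanishes and only the supremum over $a$ matters.

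Next I would substitute the formula for $|f_{b}'(z)|^{2}$ into this integral and use $1-|\sigma_{a}(z)|^{2}=\frac{(1-|a|^{2})(1-|z|^{2})}{|1-\overline{a}z|^{2}}$. This reduces the problem to bounding, uniformly in $a,b\in\mathbb{D}$,
$$(1-|b|^{2})^{6-4\beta}(1-|a|^{2})^{p+2-2\beta}\int_{\mathbb{D}}\frac{(1-|z|^{2})^{p-2+2\beta}}{|1-\overline{b}z|^{4}\,|1-\overline{a}z|^{2(p+2-2\beta)}}dA(z).$$
This is exactly the shape to which Lemma~\ref{lema} applies, with $s=p-2+2\beta$, $r=4$, $t=2(p+2-2\beta)$; I would check the hypotheses $s>-1$ (i.e.\ $p+2\beta>1$, which holds since $p+2\beta>2$), $s+2<r$ (i.e.\ $p+2\beta<4$, clear since $p\le1,\beta\le1$), and $t<s+2$ (i.e.\ $2(p+2-2\beta)<p+2\beta$, i.e.\ $p+4<6\beta$; here I may need the stronger standing hypothesis $2\beta-p\ge1$ or to note that for the relevant parameter range this holds, or else split off the easy case $b=a$ and treat general $b$ by a comparison argument). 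Applying Lemma~\ref{lema} with $b$ playing the role of the ``$r$-point'' gives the bound
$$\lesssim (1-|b|^{2})^{6-4\beta}(1-|a|^{2})^{p+2-2\beta}\cdot\frac{1}{(1-|b|^{2})^{4-(p-2+2\beta)-2}\,|1-\overline{a}b|^{2(p+2-2\beta)}}=\frac{(1-|b|^{2})^{p+2-2\beta}(1-|a|^{2})^{p+2-2\beta}}{|1-\overline{a}b|^{2(p+2-2\beta)}}.$$

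Finally I would observe that the last expression equals $(1-|\sigma_{a}(b)|^{2})^{p+2-2\beta}\le 1$, since $|\sigma_{a}(b)|<1$ and $p+2-2\beta>0$. This gives the uniform bound $\|f_{b}\|_{Q_{p}^{\beta}(\mathbb{D})}\lesssim 1$ and completes the proof. The main obstacle I anticipate is the verification of the third inequality $t<s+2$ in the hypotheses of Lemma~\ref{lema}: if the parameter constraints in the lemma's statement are not by themselves sufficient to guarantee $2(p+2-2\beta)<p+2\beta$, I would either invoke the additional hypothesis $2\beta-p\ge1$ used elsewhere in Section~4, or handle the borderline/failing range by a direct estimate of the integral (e.g.\ splitting $\mathbb{D}$ into the region near $b$ and its complement), which is routine but slightly tedious. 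Everything else is a mechanical application of the standard integral estimate and the identity for $1-|\sigma_a(z)|^2$.
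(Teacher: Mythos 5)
Your setup is the same as the paper's: compute $|f_{b}'(z)|^{2}=(1-|b|^{2})^{6-4\beta}|1-\overline{b}z|^{-4}$, note $f_{b}(0)=0$, pass to the M\"obius-invariant form of the norm, and reduce to the two-point integral handled by Lemma~\ref{lema}. But the way you feed the integral into Lemma~\ref{lema} has a genuine gap, and the obstacle you flag at the end is real and is \emph{not} repaired by either of your suggested fixes. With $s=p+2\beta-2$, $r=4$ and $t=2(p+2-2\beta)$, the hypothesis $t<s+2$ reads $2(p+2-2\beta)<p+2\beta$, i.e.\ $p+4<6\beta$. Since $\beta\le 1$ forces $6\beta\le 6$ while $p+4>4$, this fails for every $\beta\le 2/3$ (and more generally whenever $\beta\le(p+4)/6$), which is most of the admissible range $1/2<\beta\le 1$. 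The standing hypothesis $2\beta-p\ge 1$ from Section~4 does not rescue it: e.g.\ $\beta=0.6$, $p=0.1$ gives $2\beta-p=1.1\ge 1$ but $p+4=4.1>3.6=6\beta$. And the fallback of ``splitting $\mathbb{D}$ near $b$ and its complement'' is not carried out, so as written the argument does not close.

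The missing step is small and is exactly what the paper does before invoking Lemma~\ref{lema}: since $1-|a|^{2}\le 2|1-\overline{a}z|$ for all $z\in\mathbb{D}$ and $p+2-2\beta>0$, one has pointwise
\begin{equation*}
\frac{(1-|a|^{2})^{p+2-2\beta}}{|1-\overline{a}z|^{2(p+2-2\beta)}}\lesssim\frac{1}{|1-\overline{a}z|^{p+2-2\beta}},
\end{equation*}
which halves the exponent on $|1-\overline{a}z|$. Now Lemma~\ref{lema} applies with $t=p+2-2\beta$, $r=4$, $s=p+2\beta-2$, and the condition $t<s+2$ becomes $p+2-2\beta<p+2\beta$, i.e.\ $\beta>1/2$, which always holds; the other conditions $s>-1$ and $s+2<r$ are clear. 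One then gets the bound $(1-|b|^{2})^{p+2-2\beta}|1-\overline{a}b|^{-(p+2-2\beta)}\lesssim 1$ using $|1-\overline{a}b|\gtrsim 1-|b|^{2}$. (In the subrange where your stronger condition $p+4<6\beta$ does hold, your version is fine and even yields the cleaner bound $(1-|\sigma_{a}(b)|^{2})^{p+2-2\beta}\le 1$, but that subrange excludes most of the parameters covered by the lemma.)
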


\begin{proof}
Applying Lemma 5 for $t=p+2-2\beta$, $r=4$ and $s=p+2\beta-2$, we
get
\begin{equation}
\begin{split}\nonumber
&\sup_{a\in
\mathbb{D}}\int_{\mathbb{D}}|f_{b}'(z)|^{2}(1-|z|^{2})^{4\beta-4}(1-|\sigma_{a}(z)|^{2})^{p+2-2\beta}dA(z)\\
=&\sup_{a\in
\mathbb{D}}\int_{\mathbb{D}}\frac{(1-|b|^{2})^{6-4\beta}(1-|z|^{2})^{p+2\beta-2}(1-|a|^{2})^{p+2-2\beta}}
{|1-\overline{b}z|^{4}|1-\overline{a}z|^{2(p+2-2\beta)}}dA(z)\\
\lesssim& (1-|b|^{2})^{6-4\beta}\sup_{a\in
\mathbb{D}}\int_{\mathbb{D}}\frac{(1-|z|^{2})^{p+2\beta-2}}
{|1-\overline{b}z|^{4}|1-\overline{a}z|^{p+2-2\beta}}dA(z)
\\\lesssim &\frac{(1-|b|^{2})^{2-2\beta+p}}{|1-\overline{a}b|^{p+2-2\beta}}
\\\lesssim & 1.
\end{split}
\end{equation}
So $f_{b}(z)\in Q_{p}^{\beta}(\mathbb{D})$ and
$\|f_{b}(z)\|_{Q_{p}^{\beta}(\mathbb{D})}\lesssim 1$.
\end{proof}

\begin{theorem}\label{tha}
For $0<p\leq 1$ and $1/2<\beta\leq 1$. Let $g\in H(\mathbb{D})$.
Then $I_{g}$ is bounded on $Q_{p}^{\beta}(\mathbb{D})$ if and only
if $g\in H^{\infty}$. Moreover, $\|I_{g}\|\approx \sup\limits_{z\in
\mathbb{D}}|g(z)|$.
\end{theorem}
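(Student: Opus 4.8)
The plan is to prove the two implications separately, using the square-function (Carleson-measure) description of the $Q_p^\beta(\mathbb{D})$ norm together with the test functions $f_b$ from Lemma~\ref{lemb}. First I would record the basic identity: since $I_g f(z)=\int_0^z f'(w)g(w)\,dw$, we have $(I_g f)'(z)=f'(z)g(z)$, so that
$$\int_{S(I)}|(I_g f)'(z)|^2(1-|z|^2)^{p-2+2\beta}\,dA(z)=\int_{S(I)}|f'(z)|^2|g(z)|^2(1-|z|^2)^{p-2+2\beta}\,dA(z).$$
This makes the sufficiency direction almost immediate: if $g\in H^\infty$, then $|g(z)|^2\le \|g\|_{H^\infty}^2$ pointwise, so the right-hand side is bounded by $\|g\|_{H^\infty}^2\int_{S(I)}|f'(z)|^2(1-|z|^2)^{p-2+2\beta}\,dA(z)$; dividing by $|I|^{p+2-2\beta}$ and taking the supremum over arcs $I$ gives $\|I_g f\|_{Q_p^\beta(\mathbb{D})}\lesssim \|g\|_{H^\infty}\|f\|_{Q_p^\beta(\mathbb{D})}$ (the $I_gf(0)=0$ term is harmless). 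Hence $\|I_g\|\lesssim \sup_{z\in\mathbb{D}}|g(z)|$.

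For the necessity direction, suppose $I_g$ is bounded on $Q_p^\beta(\mathbb{D})$; I must show $g\in H^\infty$ with $\|g\|_{H^\infty}\lesssim \|I_g\|$. Fix $b\in\mathbb{D}$ and apply $I_g$ to the test function $f_b(z)=(1-|b|^2)^{2-2\beta}(\sigma_b(z)-b)$, which by Lemma~\ref{lemb} lies in $Q_p^\beta(\mathbb{D})$ with norm $\lesssim 1$. A direct computation gives $f_b'(z)=(1-|b|^2)^{2-2\beta}\sigma_b'(z)=-(1-|b|^2)^{2-2\beta}\,\frac{1-|b|^2}{(1-\overline b z)^2}$, so $|f_b'(z)|\approx (1-|b|^2)^{3-2\beta}/|1-\overline b z|^2$. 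Boundedness of $I_g$ then yields, for every arc $I$,
$$\frac{1}{|I|^{p+2-2\beta}}\int_{S(I)}|f_b'(z)|^2|g(z)|^2(1-|z|^2)^{p-2+2\beta}\,dA(z)\lesssim \|I_g\|^2.$$
Now I would choose $I=I_b$ to be the arc with center $b/|b|$ and length $|I_b|\approx 1-|b|^2$, so that $S(I_b)$ is a Carleson box containing the point $b$ and comparable to a ``top half'' of $\mathbb{D}$ near $b$; on this box one has $|1-\overline b z|\approx 1-|b|^2$ and $1-|z|^2\approx 1-|b|^2$. Substituting these comparabilities, the integrand is $\approx (1-|b|^2)^{6-4\beta}(1-|b|^2)^{-4}|g(z)|^2(1-|b|^2)^{p-2+2\beta}=(1-|b|^2)^{p+2\beta-4}|g(z)|^2(1-|b|^2)^{... }$ — after bookkeeping the powers of $(1-|b|^2)$ and the area factor $|S(I_b)|\approx (1-|b|^2)^2$, and dividing by $|I_b|^{p+2-2\beta}$, all powers of $1-|b|^2$ cancel and one is left with $\frac{1}{|S(I_b)|}\int_{S(I_b)}|g(z)|^2\,dA(z)\lesssim \|I_g\|^2$. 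Since $g$ is analytic, the sub-mean-value property gives $|g(b)|^2\lesssim \frac{1}{|S(I_b)|}\int_{S(I_b)}|g(z)|^2\,dA(z)$ (the disc of radius $\approx (1-|b|^2)/C$ about $b$ sits inside $S(I_b)$), hence $|g(b)|\lesssim \|I_g\|$. As $b\in\mathbb{D}$ is arbitrary, $g\in H^\infty$ and $\sup_{z\in\mathbb{D}}|g(z)|\lesssim \|I_g\|$.

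Combining both directions gives $\|I_g\|\approx \sup_{z\in\mathbb{D}}|g(z)|$, completing the proof. The main obstacle is the bookkeeping in the necessity half: one must choose the Carleson box $S(I_b)$ correctly and track all the powers of $(1-|b|^2)$ coming from $f_b'$, from the weight $(1-|z|^2)^{p-2+2\beta}$, from the normalization $|I_b|^{p+2-2\beta}$, and from the area element, verifying that they cancel exactly so that a clean averaged lower bound for $|g(b)|$ falls out — this cancellation is precisely what makes $f_b$ the right test function and is forced by the dilation-invariance (criticality) built into the definition of $Q_p^\beta$. Everything else (the $(I_gf)'=f'g$ identity, the $H^\infty$ bound in the sufficiency direction, and the sub-mean-value inequality) is routine.
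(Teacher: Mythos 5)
Your proposal is correct and follows essentially the same strategy as the paper: the pointwise bound $|g|\le\|g\|_{H^\infty}$ for sufficiency, and the test functions $f_b$ of Lemma~\ref{lemb} plus subharmonicity of $|g|^2$ for necessity, with the exponent bookkeeping working out exactly as you describe. The only (inessential) difference is that the paper runs the necessity argument through the M\"obius-invariant form of the norm, taking $a=b$, changing variables by $\sigma_b$ and invoking Lemma 4.12 of \cite{zhu} at the origin, whereas you stay with the Carleson-box form and apply the sub-mean-value inequality on a disc of radius comparable to $1-|b|^2$ about $b$ inside $S(I_b)$.
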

\begin{proof}
It is easy to see that
\begin{equation}
\begin{split}\nonumber
\|I_{g}f\|_{Q_{p}^{\beta}(\mathbb{D})}&=\sup_{a\in\mathbb
D}\left(\int_{\mathbb{D}}|f'(z)|^{2}|g(z)|^{2}(1-|z|^{2})^{4\beta-4}(1-|\sigma_{a}(z)|^{2})^{p+2-2\beta}dA(z)\right)^{1/2}
\\&\leq\sup_{z\in \mathbb{D}}|g(z)|\|f\|_{Q_{p}^{\beta}(\mathbb{D})}.
\end{split}
 \end{equation}
That is
 $$\|I_{g}\|\leq \sup\limits_{z\in \mathbb{D}}|g(z)|.$$

 On the other hand, consider the test function $f_{b}(z)$ as in Lemma $\ref{lemb}$.
 We get
\begin{equation}
\begin{split}\nonumber
\|I_{g}\|&\gtrsim
\|I_{g}f_{b}\|_{Q_{p}^{\beta}(\mathbb{D})}\\&=\sup_{a\in\mathbb
D}\left(\int_{\mathbb{D}}(1-|b|^{2})|^{4-4\beta}|\sigma_{b}'(z)|^{2}|g(z)|^{2}(1-|z|^{2})^{4\beta-4}
(1-|\sigma_{a}(z)|^{2})^{p+2-2\beta}dA(z)\right)^{1/2}
\\&\geq \left(\int_{\mathbb{D}}(1-|b|^{2})|^{4-4\beta}|g(\sigma_{b}(z))|^{2}(1-|\sigma_{b}(z)|^{2})^{4\beta-4}
(1-|z|^{2})^{p+2-2\beta}dA(z)\right)^{1/2}
\\&=\left(\int_{\mathbb{D}}|g(\sigma_{b}(z))|^{2}|1-\overline{b}z|^{2(4-4\beta)}(1-|z|^{2})^{p+2\beta-2}dA(z)\right)^{1/2}
\\&\gtrsim |g(b)|,
\end{split}
 \end{equation}
 where we use Lemma 4.12 of \cite{zhu} in the last inequality.
Since $b$ is arbitrary in $\mathbb{D}$,  we have
$$\|I_{g}\|\gtrsim
\sup\limits_{z\in \mathbb{D}}|g(z)|.$$ This completes the proof.
\end{proof}
\begin{theorem}\label{thb}
 For $0<p\leq 1$ and $1/2<\beta<1$. Let $g\in H(\mathbb{D})$. Then $T_{g}$ is bounded on
$Q_{p}^{\beta}(\mathbb{D})$ if and only if $g\in
Q_{p}^{\beta}(\mathbb{D})$. Moreover, $\|T_{g}\|\approx
\|g\|_{Q_{p}^{\beta}(\mathbb{D})}$.
\end{theorem}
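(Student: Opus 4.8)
The plan is to establish the two implications separately, following the same pattern as the proof of Theorem \ref{tha}. For the \emph{sufficiency} direction, suppose $g\in Q_{p}^{\beta}(\mathbb{D})$. Using the Remark's equivalent norm, I would estimate, for $f\in Q_{p}^{\beta}(\mathbb{D})$,
\begin{equation}\nonumber
\begin{split}
\|T_{g}f\|_{Q_{p}^{\beta}(\mathbb{D})}^{2}&\approx\sup_{a\in\mathbb{D}}\int_{\mathbb{D}}|f(z)|^{2}|g'(z)|^{2}(1-|z|^{2})^{4\beta-4}(1-|\sigma_{a}(z)|^{2})^{p+2-2\beta}dA(z).
\end{split}
\end{equation}
The key point is that any $f\in Q_{p}^{\beta}(\mathbb{D})\subset BMOA$ (via Property 1(ii) and the Poisson-extension identification of Theorem \ref{th1}) satisfies the standard pointwise bound $|f(z)|\lesssim\|f\|\log\frac{2}{1-|z|^{2}}$, and in fact one has the sharper Carleson-type statement that $|f(z)-f(0)|^{2}\,d\mu_{g,a}(z)$ can be controlled by splitting $f=f(0)+(f-f(0))$. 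The constant part contributes $|f(0)|^{2}\|g\|_{Q_{p}^{\beta}(\mathbb{D})}^{2}$ directly; for the oscillation part I would use that $|f'(z)|^{2}(1-|z|^{2})^{p-2+2\beta}dA(z)$ is a $(p+2-2\beta)$-Carleson measure together with the subordination $|f(z)-f(0)|\lesssim\int_{0}^{|z|}|f'(sz/|z|)|\,ds$, or alternatively invoke the known fact that multiplication $f\mapsto f\cdot(\text{derivative-symbol of a }Q_{p}^{\beta}\text{ function})$ respects Carleson measures. Concretely, I would reduce to showing that $|g'(z)|^{2}(1-|z|^{2})^{p-2+2\beta}dA(z)$ being $(p+2-2\beta)$-Carleson forces $|f(z)|^{2}|g'(z)|^{2}(1-|z|^{2})^{p-2+2\beta}dA(z)$ to be $(p+2-2\beta)$-Carleson whenever $f\in Q_{p}^{\beta}(\mathbb{D})$; this follows from the logarithmic growth of $BMOA$ functions absorbing a power of $\log$ against the measure, exactly as in the classical $\beta=1$ case.

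For the \emph{necessity} direction, suppose $T_{g}$ is bounded on $Q_{p}^{\beta}(\mathbb{D})$. I would test against the family $f_{b}$ from Lemma \ref{lemb}, which has norm $\lesssim 1$ uniformly in $b$. Since $f_{b}(b)=(1-|b|^{2})^{2-2\beta}(\sigma_{b}(b)-b)=-b(1-|b|^{2})^{2-2\beta}$, the function $f_{b}$ does not vanish near $b$; more useful is that $|f_{b}(z)|$ is comparable to a positive constant on a fixed hyperbolic neighborhood of $b$ once $|b|$ is bounded away from $0$ (and one handles $b$ near $0$ trivially). Then
\begin{equation}\nonumber
\begin{split}
\|T_{g}\|^{2}&\gtrsim\|T_{g}f_{b}\|_{Q_{p}^{\beta}(\mathbb{D})}^{2}\\
&\gtrsim\int_{\mathbb{D}}|f_{b}(z)|^{2}|g'(z)|^{2}(1-|z|^{2})^{4\beta-4}(1-|\sigma_{b}(z)|^{2})^{p+2-2\beta}dA(z)\\
&\gtrsim\int_{S(I_{b})}|g'(z)|^{2}(1-|z|^{2})^{p-2+2\beta}\,dA(z)\cdot|I_{b}|^{-(p+2-2\beta)},
\end{split}
\end{equation}
where $I_{b}$ is the arc with $1-|I_{b}|=|b|$ centered at $b/|b|$ and we used that $(1-|\sigma_{b}(z)|^{2})^{p+2-2\beta}\approx|I_{b}|^{-(p+2-2\beta)}(1-|z|^{2})^{p+2-2\beta}$ on $S(I_{b})$ together with the lower bound on $|f_{b}|$ there. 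Taking the supremum over $b$ (equivalently over arcs $I$) yields that $|g'(z)|^{2}(1-|z|^{2})^{p-2+2\beta}dA(z)$ is a $(p+2-2\beta)$-Carleson measure, i.e. $g\in Q_{p}^{\beta}(\mathbb{D})$, with $\|g\|_{Q_{p}^{\beta}(\mathbb{D})}\lesssim\|T_{g}\|$.

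The main obstacle I anticipate is the sufficiency direction: one must carefully justify that a $BMOA$-type pointwise factor $|f(z)|$ (which grows like $\log\frac{1}{1-|z|}$) does not destroy the Carleson-measure property of $|g'(z)|^{2}(1-|z|^{2})^{p-2+2\beta}dA(z)$. This requires either a logarithmic Carleson-embedding estimate (the measure $d\nu(z)=|g'(z)|^{2}(1-|z|^{2})^{p-2+2\beta}dA(z)$ satisfies $\nu(S(I))\lesssim|I|^{p+2-2\beta}$, and $\int_{S(I)}(\log\frac{2}{1-|z|})^{2}d\nu(z)\lesssim|I|^{p+2-2\beta}$ by a dyadic decomposition of $S(I)$ into annuli), or an appeal to the stronger fact that $f\in Q_{p}^{\beta}(\mathbb{D})$ gives a genuine Carleson measure rather than just growth. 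I would handle this by the dyadic-annulus argument, which is routine but is the only place where the $Q_{p}^{\beta}$ (rather than merely $BMOA$) hypothesis on $f$ is genuinely needed in sharp form. Once this embedding is in hand, combining it with the supremum over $a\in\mathbb{D}$ and the equivalent norm from the Remark completes the estimate $\|T_{g}f\|_{Q_{p}^{\beta}(\mathbb{D})}\lesssim\|g\|_{Q_{p}^{\beta}(\mathbb{D})}\|f\|_{Q_{p}^{\beta}(\mathbb{D})}$, and the two directions together give $\|T_{g}\|\approx\|g\|_{Q_{p}^{\beta}(\mathbb{D})}$.
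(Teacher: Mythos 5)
Both directions of your proposal have genuine gaps, and both stem from missing the one structural fact the paper's proof turns on: for $\beta<1$ the space $Q_{p}^{\beta}(\mathbb{D})$ consists of \emph{bounded} functions. Indeed (Zhao, Theorem 2.10 of \cite{ZR1}) one has $\sup_{z\in\mathbb{D}}(1-|z|^{2})^{2\beta-1}|f'(z)|\lesssim\|f\|_{Q_{p}^{\beta}(\mathbb{D})}$, and since $2\beta-1<1$ this integrates along radii to $|f(z)|\lesssim\|f\|_{Q_{p}^{\beta}(\mathbb{D})}$. With that, sufficiency is one line: $\|T_{g}f\|\lesssim\|f\|_{\infty}\|g\|_{Q_{p}^{\beta}(\mathbb{D})}\lesssim\|f\|_{Q_{p}^{\beta}(\mathbb{D})}\|g\|_{Q_{p}^{\beta}(\mathbb{D})}$. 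Your route via the $BMOA$ logarithmic growth bound cannot close: the ``routine dyadic-annulus'' embedding $\int_{S(I)}\bigl(\log\tfrac{2}{1-|z|}\bigr)^{2}d\nu(z)\lesssim|I|^{p+2-2\beta}$ is \emph{false} for a general $(p+2-2\beta)$-Carleson measure $\nu$. Writing $S(I)=\bigcup_{k}T_{k}$ with $T_{k}=\{z\in S(I):2^{-k-1}|I|<1-|z|\le 2^{-k}|I|\}$ and covering $T_{k}$ by $2^{k}$ boxes of side $2^{-k}|I|$ gives only $\nu(T_{k})\lesssim|I|^{s}2^{k(1-s)}$ with $s=p+2-2\beta$, and summing against $\bigl(k+\log\tfrac{2}{|I|}\bigr)^{2}$ leaves an uncancelled $\bigl(\log\tfrac{2}{|I|}\bigr)^{2}$ (and no decay at all if $s\le 1$). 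This is precisely why, in the case $\beta=1$, the symbols for which $T_{g}$ is bounded on $BMOA$ or $Q_{p}$ form the strictly smaller logarithmic classes; if your embedding were true, it would prove a false theorem in that case.

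The necessity direction as written also fails. Since $\sigma_{b}(z)-b=-\dfrac{(1-|b|^{2})z}{1-\overline{b}z}$, on $S(I_{b})$ one has $|f_{b}(z)|\approx(1-|b|^{2})^{2-2\beta}$, which tends to $0$ as $|b|\to1$ because $\beta<1$; it is \emph{not} comparable to an absolute positive constant there. Carrying your estimate through therefore yields only $|I_{b}|^{2-2\beta-p}\int_{S(I_{b})}|g'(z)|^{2}(1-|z|^{2})^{p+2\beta-2}dA(z)\lesssim\|T_{g}\|^{2}$, which is off from the required $|I_{b}|^{2\beta-2-p}$ normalization by the factor $|I_{b}|^{4-4\beta}$, so no Carleson bound follows. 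The paper's necessity argument is much simpler and does work: test with $f\equiv1$, for which $T_{g}f=g-g(0)$ and hence $\|T_{g}\|\ge\|T_{g}f\|_{Q_{p}^{\beta}(\mathbb{D})}=\|g\|_{Q_{p}^{\beta}(\mathbb{D})}$ (up to the $|g(0)|$ normalization). In short, the theorem is true because $\beta<1$ forces $Q_{p}^{\beta}(\mathbb{D})\subset H^{\infty}$, and any proof that does not use this (or an equivalent) cannot succeed.
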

\begin{proof}
Take $f(z)\equiv 1$ on $\mathbb{D}$. It is easy to verify that
$\|f\|_{Q_{p}^{\beta}(\mathbb{D})}=1$. So
$$\|T_{g}\|\geq \|T_{g}f\|_{Q_{p}^{\beta}(\mathbb{D})}=\|g\|_{Q_{p}^{\beta}(\mathbb{D})}.$$

Conversely, from the proof of Theorem 2.10 in \cite{ZR1}, we know
$$\sup_{z\in \mathbb{D}}(1-|z|^{2})^{2\beta-1}|f'(z)|\lesssim \|f\|_{Q_{p}^{\beta}(\mathbb{D})},
\ f\in Q_{p}^{\beta}(\mathbb{D}).$$ Hence
\begin{equation}
\begin{split}\nonumber
|f(z)-f(0)|&=\Big|\int_{0}^{1}f'(tz)zdt\Big|
\\&\leq\|f\|_{Q_{p}^{\beta}(\mathbb{D})}\int_{0}^{1}\frac{1}{(1-t)^{2\beta-1}}dt
\\&\lesssim
\|f\|_{Q_{p}^{\beta}(\mathbb{D})}.
\end{split}
 \end{equation}
Then
$$|f(z)|\lesssim \|f\|_{Q_{p}^{\beta}(\mathbb{D})}.$$
We get
\begin{equation}
\begin{split}\nonumber
\|T_{g}f\|_{Q_{p}^{\beta}(\mathbb{D})}&=\sup_{a\in\mathbb
D}\left(\int_{\mathbb{D}}|f(z)|^{2}|g'(z)|^{2}(1-|z|^{2})^{4\beta-4}(1-|\sigma_{a}(z)|^{2})^{p+2-2\beta}dA(z)\right)^{1/2}
\\&\lesssim\|f\|_{Q_{p}^{\beta}(\mathbb{D})}\|g\|_{Q_{p}^{\beta}(\mathbb{D})}.
\end{split}
 \end{equation}
 Thus $\|T_{g}\|\lesssim\|g\|_{Q_{p}^{\beta}(\mathbb{D})}$. The proof is completed.
\end{proof}

From Theorems $\ref{tha}$ and $\ref{thb}$, we get the main result of
this section.

\begin{theorem}\label{thc}
Let $0<p\leq 1$ and $1/2<\beta<1$. Then
$\mathcal{M}(Q_{p}^{\beta}(\mathbb{D}))=Q_{p}^{\beta}(\mathbb{D})$.
\end{theorem}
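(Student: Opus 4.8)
\textbf{Proof proposal for Theorem \ref{thc}.}
The plan is to prove the double inclusion $\mathcal{M}(Q_{p}^{\beta}(\mathbb{D}))=Q_{p}^{\beta}(\mathbb{D})$ by first reducing the multiplier problem to the boundedness of the two companion operators $I_{g}$ and $T_{g}$, and then invoking Theorems \ref{tha} and \ref{thb} together with the elementary containment $Q_{p}^{\beta}(\mathbb{D})\subset H^{\infty}$ established inside the proof of Theorem \ref{thb}. Concretely, recall from Section 5 the decomposition
$$M_{g}f(z)=f(0)g(0)+I_{g}f(z)+T_{g}f(z),$$
valid for all $f,g\in H(\mathbb{D})$. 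Since the constant $f(0)g(0)$ lies trivially in $Q_{p}^{\beta}(\mathbb{D})$ (its derivative vanishes), $M_{g}$ is bounded on $Q_{p}^{\beta}(\mathbb{D})$ as soon as both $I_{g}$ and $T_{g}$ are, and conversely we will extract information on $g$ from the boundedness of $M_{g}$.

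For the inclusion $Q_{p}^{\beta}(\mathbb{D})\subseteq \mathcal{M}(Q_{p}^{\beta}(\mathbb{D}))$, suppose $g\in Q_{p}^{\beta}(\mathbb{D})$. By Theorem \ref{thb}, $T_{g}$ is bounded on $Q_{p}^{\beta}(\mathbb{D})$. Moreover, the growth estimate $\sup_{z\in\mathbb{D}}(1-|z|^{2})^{2\beta-1}|g'(z)|\lesssim\|g\|_{Q_{p}^{\beta}(\mathbb{D})}$ used in that proof, after integration (using $2\beta-1<1$), gives $\|g\|_{H^{\infty}}\lesssim\|g\|_{Q_{p}^{\beta}(\mathbb{D})}<\infty$; hence $g\in H^{\infty}$ and Theorem \ref{tha} yields the boundedness of $I_{g}$ on $Q_{p}^{\beta}(\mathbb{D})$. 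Therefore $M_{g}$ is bounded on $Q_{p}^{\beta}(\mathbb{D})$, i.e. $gQ_{p}^{\beta}(\mathbb{D})\subseteq Q_{p}^{\beta}(\mathbb{D})$, so $g\in\mathcal{M}(Q_{p}^{\beta}(\mathbb{D}))$.

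For the reverse inclusion $\mathcal{M}(Q_{p}^{\beta}(\mathbb{D}))\subseteq Q_{p}^{\beta}(\mathbb{D})$, let $g\in\mathcal{M}(Q_{p}^{\beta}(\mathbb{D}))$. Testing the multiplication on the constant function $f\equiv 1$, which belongs to $Q_{p}^{\beta}(\mathbb{D})$ with $\|1\|_{Q_{p}^{\beta}(\mathbb{D})}=1$, gives $g=M_{g}1\in Q_{p}^{\beta}(\mathbb{D})$ directly. (Equivalently: by the Closed Graph Theorem $M_{g}$ is bounded, and then $g=M_{g}1-g(0)+g(0)$ shows $\|g\|_{Q_{p}^{\beta}(\mathbb{D})}\lesssim\|M_{g}\|$.) Combining the two inclusions proves the theorem; one also records the norm comparison $\|M_{g}\|\approx\|g\|_{Q_{p}^{\beta}(\mathbb{D})}$ from the norm estimates in Theorems \ref{tha} and \ref{thb}. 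The only genuinely substantive inputs are Theorems \ref{tha} and \ref{thb} themselves; the present argument is purely a matter of assembling them, so there is no real obstacle beyond making sure the decomposition of $M_{g}$ and the $H^{\infty}$-containment of $Q_{p}^{\beta}(\mathbb{D})$ are invoked in the right order.
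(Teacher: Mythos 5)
Your proposal is correct and follows essentially the same route as the paper: decompose $M_{g}$ into $I_{g}$ and $T_{g}$, use the containment $Q_{p}^{\beta}(\mathbb{D})\subset H^{\infty}$ (obtained, as in the proof of Theorem \ref{thb}, by integrating the pointwise derivative bound with $2\beta-1<1$) together with Theorems \ref{tha} and \ref{thb} for one inclusion, and test $M_{g}$ on the constant function $1$ for the other. No issues to flag.
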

\begin{proof}
If $g\in Q_{p}^{\beta}(\mathbb{D})$, by the proof of Theorem
\ref{thb} that $g$ is bounded. It follows from Theorems \ref{tha}
and \ref{thb} that $M_{g}$ is bounded on
$Q_{p}^{\beta}(\mathbb{D})$. So $g\in
\mathcal{M}(Q_{p}^{\beta}(\mathbb{D}))$ and then
$Q_{p}^{\beta}(\mathbb{D})\subset
\mathcal{M}(Q_{p}^{\beta}(\mathbb{D}))$.

 If $g\in
\mathcal{M}(Q_{p}^{\beta}(\mathbb{D}))$, take $f=1\in
Q_{p}^{\beta}(\mathbb{D})$, then $g=gf\in Q_{p}^{\beta}(\mathbb{D})$
and $\mathcal{M}(Q_{p}^{\beta}(\mathbb{D})) \subset
Q_{p}^{\beta}(\mathbb{D})$.
\end{proof}


\begin{thebibliography}{9}

\bibitem{ASX} Aulaskari R., Stegenga D. A., Xiao J.:  Some subclasses
of BMOA and their characterization in terms of Carleson measures,
Rocky Mountain J. Math. \textbf{26}(2), 485-506 (1996)

\bibitem{AXZ}Aulaskari R., Xiao J., Zhao R.: On subspaces and
subsets of BMOA and UBC, Analysis \textbf{15}(2),
101-121 (1995)

\bibitem{AA}  Anderson A.:
 Some closed range integral operators on spaces of analytic
functions, Integr. Equ. Oper. Theory \textbf{69}
(1), 87-99 (2011)

\bibitem{AA1}Anderson A.: Multiplication and Integral Operators on Banach Spaces of Analytic
Functions, PhD Thesis, University of Hawaii, 2010.

\bibitem{BS}Brown L., Shields A. L.: Cyclic vectors in the Dirichlet
space, Trans. Amer. Math. Soc. \textbf{285} (1), 269-303  (1984)

\bibitem{CL} Carleson L.: Interpolations by bounded analytic functions
and the corona problem, Ann.  Math. \textbf{76}, 547-559 (1962)

\bibitem{DX} Dafni G.,  Xiao J.:  Some new tent spaces and duality
theorems for fractional Carleson measures and
$Q_{\alpha}(\mathbb{R}^n)$, J. Funct. Anal. \textbf{208} (
2), 377-422 (2004)

\bibitem{EJPX} Ess\'{e}n M., Janson S.,  Peng L., Xiao J.: $Q$ spaces of several real variables,
 Indiana Univ. Math. J. \textbf{49} (2), 575-615 (2000)

\bibitem{GJB} Garnett J. B.: Bounded Analytic Functions, Revised first
edition. Graduate Texts in Mathematics, 236. Springer, New York,
2007.

\bibitem{GD} Girela D.: Analytic functions of bounded mean
oscillation, Complex Function Spaces (Mekrij\"{a}rvi, 1999),
61-170, Univ. Joensuu Dept. Math. Rep. Ser., 4, Univ. Joensuu,
Joensuu, 2001.

\bibitem{DP}Duren P. L.: Theory of $H^p$ Spaces, Pure and Applied Mathematics, Vol. 38 Academic Press, New York-London 1970.


\bibitem{KT} Koch H., Tataru D.: Well-posedness for the
Navier-Stokes equations, Adv. Math. \textbf{157}(1),
22-35 (2001)

\bibitem{LZ} Li P., Zhai Z.: Well-posedness and regularity of generalized
 Navier-Stokes equations in some critical $Q$-spaces, J. Funct. Anal. \textbf{259}(10), 2457-2519 (2010)

\bibitem{LZ2}
 Li P., Zhai Z.: Riesz transforms on $Q$-type spaces with
application to quasi-geostrophic equation, {arXiv: 0907.0856v1
[math. AP], 5 Jul. 2009} ({To appear in Taiwanese Journal of
Mathematics}).

 \bibitem{NX} Nicolau A., Xiao J.: Bounded functions in M\"{o}bius invariant Dirichlet spaces,
  J. Funct. Anal. \textbf{150}(2), 383-425 (1997)

  \bibitem{RJ} R\"{a}tty\"{a} J.: On Some Complex Function Spaces and
  Classes, University of Joensuu, 2001, PhD Thesis.

  \bibitem{RW} Rudin W.: Principles of Mathematical
  Analysis, Third edition. International Series in Pure and Applied Mathematics.
McGraw-Hill Book Co., New York-Auckland-D\"{u}sseldorf, 1976.

 \bibitem{WX} Wu Z.,  Xie C.: $Q$ spaces and Morrey spaces, J. Funct. Anal. \textbf{201}(1), 282-297 (2003)

\bibitem{WZ} Wu Z.: A new characterization for Carleson measure and some
applications, Integr. Equ. Oper. Theory \textbf{71}, 161-180 (2011)

\bibitem{Wulan} Wulan H., Zhou J.: QK and Morrey type spaces, Ann. Acad. Sci. Fenn. Math. \textbf{38}, 193-207
(2013)

 \bibitem{XJ0} Xiao J.: Homothetic variant of fractional Sobolev space with application to Navier-Stokes system,
  Dyn. Partial Differ. Equ. \textbf{4}(3), 227-245 (2007)

\bibitem{XJ}  Xiao J.: The $Q_p$ Carleson measure problem, Adv. Math. \textbf{217}(5), 2075-2088 (2008)

\bibitem{XJ1}  Xiao J.: Some essential properties of
$Q_p(\partial\Delta)$-spaces, J. Fourier Anal. Appl.
\textbf{6}(3), 311-323 (2000)

\bibitem{XJ2} Xiao  J.: Holomorphic $Q$ Classes, Lecture Notes in Mathematics,
1767. Springer-Verlag, Berlin, 2001.

 \bibitem{XJ3}Xiao, J.: Geometric $Q_p$ Functions, Frontiers in Mathematics. Birkh\"{a}user
Verlag, Basel, 2006.


\bibitem{YY} Yang D., Yuan W.:  A new class of function spaces connecting
Triebel-Lizorkin spaces and $Q$ spaces, J. Funct. Anal.
\textbf{255}(10), 2760-2809 (2008)

\bibitem{YR} Yoneda R. Multiplication operators, integration operators and
companion operators on weighted Bloch space, Hokkaido Math. J.
\textbf{34}(1), 135-147 (2005)

\bibitem{ZR} Zhao R.: Distances from Bloch functions to some M\"{o}bius
invariant spaces, Ann. Acad. Sci. Fenn. Math. \textbf{33}(1), 303-313 (2008)

\bibitem{ZR1} Zhao R.: On a general family of function spaces,
Ann. Acad. Sci. Fenn. Math. Diss. No. \textbf{105}(1996).

\bibitem{zhu} Zhu K.: Operator Theory in Function Spaces, Second edition. Mathematical Surveys and Monographs,
 138. Amer. Math. Soc, Providence (2007).
\end{thebibliography}
\end{document}